\newtheorem{theorem}{Theorem}[section]
\newtheorem{lemma}[theorem]{Lemma}
\newtheorem{satz}{thm1}
\newtheorem{lem}[satz]{Lemma}
\theoremstyle{definition}
\newtheorem{defn}[satz]{Definition}
\title{Generalization of some results on list coloring and DP-coloring}
\author{Keaitsuda Maneeruk Nakprasit$^{1}$ \hskip 0.2in Kittikorn Nakprasit$^{1}$}
\address{
$^{1}$\small Department of Mathematics, Faculty of Science, Khon Kaen University, 40002, Thailand.}
\begin{document}

\maketitle

\begin{center}{\bf Abstract}\end{center}
\indent\indent
Let $G$ be a graph and let $f_i, i \in \{1,\ldots,s\},$
be a function   from $V(G)$ to the set of nonnegative integers.
In \cite{SiNa1}, the concept of DP-$F$-coloring,
a generalization of DP-coloring and variable degeneracy, was introduced.  
We use DP-$F$-coloring to define DPG-$[k,t]$-colorable graph    
and modify the proofs in \cite{DP3, SiNa2, Tho}  to obtain more results on
list coloring, DP-coloring, list-forested coloring, and  variable degeneracy.

\section{Introduction}
Every graph in this paper is finite, simple, and undirected.
We let $V(G)$ denote the vertex set and $E(G)$ denote the edge set of a graph $G.$ 
Let $d_G(v)$ denote the degree of a vertex $v$ in a graph $G.$
If no confusion arises, we simply use $d(v)$ instead of $d_G(v).$ 
Let $S$ be a subset of $V(G).$
A subgraph of $G$ induced by $S$ is denoted by $G[S].$ 
If a plane graph $G$ contains a cycle $C,$ we use
$int(C)$ (respectively, $ext(C)$) for the subgraph induced by vertices on $C$
and inside $C$ (respectively, outside $C$).  

Let $f$ be a function from $V(G)$ to the set of positive integers.
A graph $G$ is \emph{strictly} $f$-\emph{degenerate}
if every subgraph $G'$ has a vertex $v$ with $d_{G'}(v) < f(v).$
Equivalently, $G$ is strictly $f$-degenerate
if and only if vertices of $G$ can be ordered so that 
each vertex has less than $f(v)$ neighbors in the lower order.
Let $k$ be a positive integer.
A graph $G$ is \emph{strictly} $k$-\emph{degenerate} 
if and only if $G$ is strictly $f$-degenerate where $f(v)=k$
for each vertex $v.$
Thus a strictly $1$-degenerate graph is an edgeless graph
and a strictly $2$-degenerate graph is a forest.
Equivalently, $G$ is strictly $k$-degenerate
if and only if vertices of $G$ can be ordered so that
each vertex has less than $k$ neighbors in the lower order.

Let $f_i, i \in \{1,\ldots,s\},$ be a function
from $V(G)$ to the set of nonnegative integers.
An $(f_1,\ldots,f_s)$-\emph{partition} of a graph $G$
is a partition of $V(G)$ into $V_1,\ldots, V_s$ such that
an induced subgraph $G[V_i]$ is strictly $f_i$-degenerate
for each $i\in \{1,\ldots,s\}.$
A $(k_1,\ldots,k_s)$-\emph{partition} where $k_i$ is a constant for each $i\in \{1,\ldots,s\}$
is an  $(f_1,\ldots,f_s)$-partition such that $f_i(v)=k_i$ for each vertex $v.$
We say that $G$ is $(f_1,\ldots,f_s)$-\emph{partitionable} if
$G$ has  an $(f_1,\ldots,f_s)$-partition.
By Four Color Theorem \cite{4color},
every planar graph is $(1,1,1,1)$-partitionable.
On the other hand, Chartrand and Kronk \cite{ChKr}
constructed planar graphs which are not $(2,2)$-partitionable.
Even stronger, Wegner \cite{Wegner} showed that
there exists a planar graph which is not $(2,1,1)$-partitionable.
Thus it is of interest to find sufficient conditions for planar graphs
to be $(1,1,1,1)$-, $(2,1,1)$-, or $(2,2)$-partitionable.

Borodin and Ivanova  \cite{BoIv} obtained a sufficient condition that implies
$(1,1,1,1)$-, $(2,1,1)$-, or $(2,2)$-partitionability as follows.  

\begin{theorem} \label{main00} (Theorem 6 in \cite{BoIv})
Every planar graph without $4$-cycles adjacent to $3$-cycles is
$(f_1, \ldots, f_s)$-partitionable
if  $f_1(v)+\cdots+f_s(v)\geq 4$ for each vertex $v,$
and $f_i(v)\in \{0,1,2\}$ for each $v$ and $i.$
\end{theorem}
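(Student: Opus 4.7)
The plan is a minimal-counterexample argument combined with discharging. Suppose the theorem fails; let $G$ be a counterexample with $|V(G)|$ minimum, together with functions $f_1,\ldots,f_s$ satisfying the hypotheses but for which $G$ admits no $(f_1,\ldots,f_s)$-partition.

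First I would establish that $\delta(G)\ge 4$. Suppose some vertex $v$ satisfies $d(v)\le 3$. By minimality, $G-v$ admits an $(f_1,\ldots,f_s)$-partition $(V_1,\ldots,V_s)$, realised through orderings of each class witnessing strict $f_i$-degeneracy. To extend the partition to $G$, I would append $v$ to the end of the ordering of some class $V_j$; this is valid as long as $|N(v)\cap V_j|\le f_j(v)-1$. If no such $j$ existed, then $|N(v)\cap V_j|\ge f_j(v)$ for every $j$, giving $d(v)\ge\sum_j f_j(v)\ge 4$, contradicting $d(v)\le 3$. Hence $\delta(G)\ge 4$, and in particular every vertex already has nonnegative initial charge in the discharging scheme below. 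Further reducible configurations would be derived in the same style: delete or identify a small subgraph, apply minimality to the smaller graph, and reinsert the removed vertices one at a time by verifying a local pigeonhole or SDR inequality made available by the slack $\sum_i f_i(v)\ge 4$ together with the restriction $f_i(v)\in\{0,1,2\}$.

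For the discharging step, assign each vertex $x$ initial charge $d(x)-4$ and each face $f$ charge $\ell(f)-4$; by Euler's formula the total charge equals $-8$. After the reductions only $3$-faces carry negative charge. The hypothesis that no $4$-cycle is adjacent to a $3$-cycle forces every edge of a triangle to be shared with either another triangle or a face of length at least $5$, so positive charge is available close to each triangle. I would then design rules transferring charge from $5^+$-faces and higher-degree vertices to triangles, verify that every element ends with nonnegative charge, and thereby contradict the total of $-8$.

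The main obstacle is the identification of reducible configurations past the degree-bound argument. In the variable-degeneracy setting one must simultaneously handle every admissible $(f_1,\ldots,f_s)$ with pointwise sum at least $4$ and values in $\{0,1,2\}$, so each reduction has to succeed under \emph{any} compatible distribution of $f_i$-values at the participating vertices rather than for a single coloring function. The fine-tuning of the discharging rules around configurations in which a triangle meets a $4$-vertex, or borders a $4$-face isolated from all $3$-cycles, is equally delicate, since the slack available is smallest precisely in these cases.
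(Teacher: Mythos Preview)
This statement is not proved in the paper at all: it is quoted verbatim as Theorem~6 of Borodin--Ivanova~\cite{BoIv} and serves only as background motivation, so there is no in-paper proof to compare your proposal against.

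What the paper \emph{does} prove is the stronger Theorem~\ref{main2}(4), which covers the larger class $\mathcal{A}$ of planar graphs without pairwise adjacent $3$-, $4$-, and $5$-cycles and hence implies Theorem~\ref{main00}. The route taken there is different from yours: rather than working directly with $(f_1,\ldots,f_s)$-partitions, the paper passes to the DP-$F$-coloring framework, proves one all-purpose extension lemma (Lemma~\ref{mainlemma}) that handles every reducible configuration at once, establishes $\delta\ge 4$ (Lemma~\ref{min2}) and the absence of separating triangles (Lemma~\ref{sep}), and then imports the discharging wholesale from~\cite{SiNa2}. Your high-level structure---minimal counterexample, $\delta(G)\ge 4$ via greedy extension, discharging with charges $d(x)-4$ and $\ell(f)-4$---matches what one expects from the original Borodin--Ivanova argument, but the paper's contribution is precisely to replace the case-by-case reducibility checks you flag as ``the main obstacle'' by a single lemma that works uniformly for all admissible $F$.

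As a proof of Theorem~\ref{main00} your proposal is, as you acknowledge, only an outline: neither the reducible configurations nor the discharging rules are specified, and those constitute the entire content of such a result. One small slip: under the hypothesis no $3$-face can share an edge with a $4$-face, so the configuration ``a triangle bordering a $4$-face'' that you mention cannot occur.
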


The \emph{vertex-arboricity} $va(G)$ of a graph $G$ is the minimum number of subsets in which  
$V(G)$ can be partitioned so that each subset induces a forest.
This concept was introduced by Chartrand, Kronk, and Wall \cite{ChKrWa} as \emph{point-arboricity}.  
They proved that $va(G)\leq3$ for every planar graph $G.$  
Later, Chartrand and Kronk \cite{ChKr}  proved that this bound is sharp by providing  
an example of a planar graph $G$ with $va(G)= 3.$  
It was shown that determining the vertex-arboricity of a graph
is NP-hard by Garey and Johnson \cite{NPhard}
and determining whether $va(G)\leq2$ is NP-complete for maximal planar graphs $G$
by Hakimi and Schmeichel \cite{NPcomplete}.
Raspaud and Wang \cite{RaWa} showed that
$va(G)\leq\lceil\frac{k+1}{2}\rceil$ for every $k$-degenerate graph $G$.
It was proved that every planar graph $G$ has $va(G)\leq2$
when $G$ is without $k$-cycles for $k\in\{3,4,5,6\}$ (Raspaud and Wang \cite{RaWa}),
without $7$-cycles (Huang, Shiu, and Wang \cite{HuShWa}),
without intersecting $3$-cycles (Chen, Raspaud, and Wang  \cite{ChRaWa}),  
without chordal $6$-cycles (Huang and  Wang \cite{HuWa}), or
without intersecting $5$-cycle (Cai, Wu, and Sun \cite{CaiWuSun}).

The concept of list coloring was independently introduced
by Vizing \cite{Vizing} and by Erd\H os, Rubin, and Taylor \cite{Erdos}.
A \emph{$k$-assignment} $L$ of a graph $G$ assigns a list $L(v)$ (a set of colors)
with $|L(v)|=k$ to each vertex $v$ of  $G$.
A graph $G$ is \emph{$L$-colorable}
if there is a proper coloring $c$ where $c(v)\in L(v)$ for each vertex $v.$
If $G$ is $L$-colorable for each $k$-assignment $L$,
then we say $G$ is \emph{$k$-choosable}.
The \emph{list chromatic number} of $G,$ denoted by $\chi_{l}(G),$
is the minimum number $k$ such that $G$ is $k$-choosable.

Borodin, Kostochka, and Toft \cite{BoKoTo}  
introduced list vertex arboricity
which is list version of vertex arboricity.  
We say that $G$ has an \emph{$L$-forested-coloring} $f$ for a set $L = \{L(v)|v \in V(G)\}$
if  one can choose $f(v) \in L(v)$ for each vertex $v$
so that a subgraph induced by vertices with the same color is a forest.
We say that $G$ is \emph{list vertex $k$-arborable} if $G$ has an $L$-forested-coloring for each
$k$-assignment $L.$  
The \emph{list vertex arboricity} $a_l(G)$ is defined to be  the minimum $k$ such that
$G$ is list vertex $k$-arborable.
Obviously, $a_l(G) \geq va(G)$ for every graph $G$.

It was proved that every planar graph $G$ is list vertex $2$-arborable  
when $G$ is without $k$-cycles for $k\in\{3,4,5,6\}$   (Xue and Wu \cite{XuWu}),
with no $3$-cycles at distance less than $2$ (Borodin and Ivanova \cite{BoIvC33}),  
or without $4$-cycles adjacent to $3$-cycles (Borodin and Ivanova \cite{BoIv}).  

Borodin, Kostochka, and Toft \cite{BoKoTo} observed that the notion of
 $(f_1,\ldots,f_s)$-partition can be applied to problems in list coloring
and list vertex arboricity.
Since $v$ cannot have less than zero neighbor, the condition 
that $f_i(v)=0$ is equivalent to $v$ cannot be colored by  $i.$
In other words, $i$ is not in the list of $v.$    
Thus the case of $f_i \in \{0,1\}$ corresponds to list coloring,
and one of  $f_i \in \{0,2\}$ corresponds to $L$-forested-coloring.  
Note that Theorem \ref{main00}  
implies that planar graphs without $3$-cycles adjacent to $4$-cycles
are $4$-choosable and list vertex $2$-arborable.  

Dvo\v{r}\'{a}k and Postle \cite{DP} introduced a generalization
of list coloring in which they called a \emph{correspondence coloring}.
Following Bernshteyn, Kostochka, and Pron \cite{BKP},  
we call it a \emph{DP-coloring}.

\begin{defn}\label{cover}
Let $L$ be an assignment of a graph $G.$
We call $H$ a \emph{cover} of $G$
if it satisfies all the followings:\\
(i) The vertex set of $H$ is $\bigcup_{u \in V(G)}(\{u\}\times L(u)) =
\{(u,c): u \in V(G), c \in L(u) \};$\\
(ii) $H[\{u\}\times L(u)]$ is a complete graph for each $u \in V(G);$\\
(iii) For each $uv \in E(G),$
the set $E_H(\{u\}\times L(u), \{v\}\times L(v))$ is a matching (may be empty);\\
(iv) If $uv \notin E(G),$ then no edges of $H$ connect  
$\{u\}\times L(u)$ and  $\{v\}\times L(v).$\\
Let $(G,H)$ denote a graph $G$ with a cover $H.$
\end{defn}

\begin{defn}\label{DP}
A \emph{representative set} of $(G,H)$
is a set of vertices of size $|V(G)|$
containing exactly one vertex from each $\{v\}\times L(v).$
A DP-coloring of $(G,H)$ is a representative set
$R$ that $H[R]$ has no edges.  
We say that a graph $G$ is \emph{DP-$k$-colorable} if $(G,H)$ has
a DP-coloring for  each cover $H$ of $G$  with a  $k$-assignment $L.$    
The \emph{DP-chromatic number} of $G,$ denoted by $\chi_{DP}(G),$
is the minimum number $k$ such that $G$ is DP-$k$-colorable.
\end{defn}

If we define edges on $H$ to match exactly the same colors in $L(u)$ and $L(v)$
for each $uv \in E(G),$
then $(G,H)$ has a DP-coloring if and only
if $G$ is $L$-colorable.  
Thus DP-coloring is a generalization of list coloring.  
Moreover, $\chi_{DP}(G) \geq \chi_l(G).$
For example, Alon and Tarsi \cite{Alon}
showed that every planar bipartite graph is $3$-choosable,
while Bernshteyn and Kostochka \cite{Differ} 
obtained a bipartite planar graph $G$ with $\chi_{DP}(G) = 4.$

Dvo\v{r}\'{a}k  and Postle \cite{DP} observed that
$\chi_{DP}(G) \leq 5$ for every planar graph $G.$  
This extends a seminal result by Thomassen \cite{Tho} on list colorings.
On the other hand, Voigt \cite{Vo1}  gave an example of a planar graph
which is not $4$-choosable (thus not DP-$4$-colorable).
Kim and Ozeki \cite{KimO} showed that planar graphs without $k$-cycles
are DP-$4$-colorable for each $k =3,4,5,6.$  
Kim and Yu  \cite{KimY} extended the result on $3$- and $4$-cycles
by showing that planar graphs without $3$-cycles adjacent to $4$-cycles  
are DP-$4$-colorable.

Later, the concept of DP-coloring and improper coloring  
is combined by allowing a representative set $R$
to yield $H[R]$ with edges but requiring $H[R]$ to satisfy
some degree conditions such as degeneracy \cite{SiNa1} or maximum degree \cite{SiNa2}.

\begin{defn}\label{DPrelax}
A \emph{DP-forested-coloring} of $(G,H)$ is a representative set $R$
such that $H[R]$ is a  forest.
We say that a graph $G$ is \emph{DP-vertex-$k$-arborable} if $(G,H)$ has
a DP-forested-coloring for each $k$-assignment $L$ and each cover $H$ of $G.$  
\end{defn}

If we define edges on $H$ to match exactly the same colors in $L(u)$ and $L(v)$
for each $uv \in E(G),$ then $(G,H)$ has a DP-forested-coloring
if and only if  $G$ has an $L$-forested-coloring.

From now on, we assume $G$ is a graph with a $k$-assignment of colors $L$
such that $\bigcup_{v\in V(G)} L(v) \subseteq \{1, \ldots, s\}$  
and $H$ is a cover of $G.$
Assume furthermore that $F = (f_1, \ldots, f_s)$ and $f_i,$ where $1 \leq i \leq s,$
is a function from $V(G)$ to the set of nonnegative integers.
The concept of DP-coloring is combined with $(f_1,\ldots,f_s)$-partition in \cite{SiNa1} as follows.

\begin{defn}
A \emph{DP-$F$-coloring} $R$ of $(G,H)$ is a representative set
which can be ordered so that each element $(v,i)$ in $R$
has less than $f_i(v)$ neighbors in the lower order.
Such order is called a \emph{strictly $F$-degenerate order}.
We say that $G$ is \emph{DP-$F$-colorable} if $(G,H)$ has a DP-$F$-coloring for every cover $H$.
\end{defn}

If we define edges on $H$ to match exactly the same colors
for each $uv \in E(G),$ then $G$ has an $(f_1,\ldots,f_s)$-partition
if and only if $(G,H)$ has a DP-$F$-coloring.
Thus an $(f_1,\ldots,f_s)$-partition is a special case
of a DP-$F$-coloring.
Observe that a DP-$F$-coloring where $f_i(v) \in \{0,1\}$ for each
$i$ and each vertex $v$ is equivalent to a DP-coloring.
Furthermore, a DP-$F$-coloring where $f_i(v)\in \{0,1,2\}$ for each
$i$ and each vertex $v$ is equivalent to a DP-forested-coloring.
We show in this work that  the condition $f_i(v) \in \{0,1\}$  
(DP-coloring) may be relaxed to $f_i(v) \in \{0,1,2\}$ to obtain a more general result.  
For conciseness, we define the following definition.

\begin{defn} \label{DPG}
Let $|f(v)|$  denote $f_1(v)+\cdots+f_s(v).$  
A graph $G$ is \emph{DPG-$[k,t]$-colorable} if $(G,H)$
has a DP-$F$-coloring for
every cover $H$ and $f$ such that  $|f(v)| \geq k$
and $f_i(v) \leq t$ for every vertex $v$
and every $i$ with $1 \leq i \leq s.$    
\end{defn}

\begin{lemma}
Let $C(i)$ denote the set of vertices colored $i$ in $G.$
If $G$ is DPG-$[k,2]$-colorable, then we have the followings:\\
(1) $G$ is DP-$k$-colorable and thus $k$-choosable.\\
(2) $G$ is DP-vertex-$\lceil{k/2} \rceil$-arborable.\\
(3) Let $2d > k.$
If $L$ is a $d$-assignment for $G$ where $d\leq k$
    and $1, 2, \ldots, 2d-k $ are colors,
then we can find an $L$-foreted-coloring such that
$C(i)$ is an independent set for each $i \in \{1, \ldots, 2d-k\}.$  
\end{lemma}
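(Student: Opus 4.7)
The plan is to deduce each of the three conclusions by exhibiting a suitable tuple $F=(f_1,\ldots,f_s)$ with $|f(v)|\ge k$ and $f_i(v)\le 2$, and then invoking DPG-$[k,2]$-colorability. The translation principle I use throughout is that if $f_i(v)=1$ for every $v$ with $i\in L(v)$, then in the resulting DP-$F$-coloring each $(v,i)\in R$ has no earlier neighbor; whereas if $f_i(v)=2$ each such vertex has at most one earlier neighbor, which restricts the structure of the corresponding color class.

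For part (1), given a $k$-assignment $L$ and a cover $H$, I would set $f_i(v)=1$ for $i\in L(v)$ and $f_i(v)=0$ otherwise. Then $|f(v)|=k$ and every $f_i(v)\le 2$, so a DP-$F$-coloring $R$ exists. Each $(v,i)\in R$ has zero earlier neighbors in the strictly $F$-degenerate order; examining the later endpoint of any hypothetical edge of $H[R]$ gives a contradiction, so $H[R]$ is edgeless, which is exactly a DP-coloring. Since $\chi_{l}(G)\le\chi_{DP}(G)$, the graph is also $k$-choosable. For part (2), given a $\lceil k/2\rceil$-assignment $L$ and a cover $H$, I would set $f_i(v)=2$ for $i\in L(v)$ and $f_i(v)=0$ otherwise; then $|f(v)|=2\lceil k/2\rceil\ge k$, and in any DP-$F$-coloring each vertex of $R$ has fewer than $2$ earlier neighbors, so $H[R]$ is strictly $2$-degenerate and hence a forest, which is precisely a DP-forested-coloring.

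For part (3), I would take $H$ to be the standard cover matching identical colors across edges of $G$, so that a DP-coloring is the same as an $L$-coloring. Define $f_i(v)=1$ for $i\in L(v)\cap\{1,\ldots,2d-k\}$, $f_i(v)=2$ for $i\in L(v)\setminus\{1,\ldots,2d-k\}$, and $f_i(v)=0$ otherwise. Writing $a_v=|L(v)\cap\{1,\ldots,2d-k\}|$, one has $a_v\le 2d-k$ since the target set has only $2d-k$ elements, whence $|f(v)|=a_v+2(d-a_v)=2d-a_v\ge k$. A resulting DP-$F$-coloring yields an $L$-coloring $c$; for $i\le 2d-k$ the bound $f_i(v)=1$ combined with the equal-color matching structure of $H$ rules out any edge of $G[C(i)]$, forcing $C(i)$ to be independent, while for $i>2d-k$ the bound $f_i(v)=2$ makes $G[C(i)]$ strictly $2$-degenerate and hence a forest. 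The main obstacle is really just the bookkeeping in part (3): choosing $F$ so that both $|f(v)|\ge k$ and the desired structural condition on each $C(i)$ follow simultaneously, which is exactly what the split by $\{1,\ldots,2d-k\}$ accomplishes, using $2d>k$ to ensure this set is nonempty.
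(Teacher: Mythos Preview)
Your proof is correct and follows essentially the same approach as the paper: in each part you choose the same tuple $F$ (all ones in (1), all twos in (2), and the $1/2$ split on $\{1,\ldots,2d-k\}$ with the identical-color cover in (3)) and invoke DPG-$[k,2]$-colorability. Your write-up is in fact slightly more explicit than the paper's, since you spell out the verification $|f(v)|=2d-a_v\ge k$ in part~(3) and the degeneracy-to-forest/independent-set translation that the paper leaves implicit.
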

\begin{proof} Let $G$ be a DPG-$[k,2]$-colorable graph.\\
(1) Let $L$ be a $k$-assignment of $G.$
Define $f_i(v) =1$ if $i \in L(v),$ otherwise $f_i(v)=0.$
Note that $(G,H)$ has a DP-$k$-coloring if and only if $(G,H)$ has a DP-$F$-coloring.
Since $G$ is DPG-$[k,2]$-colorable, $(G,H)$ has a DP-$k$-coloring for every cover $H.$\\  
(2) Let $L$ be a $\lceil{k/2} \rceil$-assignment of $G.$
Define $f_i(v) =2$ if $i \in L(v),$ otherwise $f_i(v)=0.$
Note that $(G,H)$ has a DP-forested-coloring
if and only if $(G,H)$ has a DP-$F$-coloring.
Since $G$ is a DPG-$[k,2]$-colorable graph, $(G,H)$ has a DP-forested-coloring
for every cover $H$ and every $\lceil{k/2} \rceil$-assignment of $G.$\\
(3) Let $L$ be a $d$-assignment of $G.$
Define $f_i(v) =1$ when $i \in L(v)$ and $1 \leq i \leq 2d - k,$
$f_i(v) =2$ when $i \in L(v)$ and $i  \geq 2d - k+1,$
and $f_i(v)=0$ otherwise.  
Let edges on $H$ match exactly the same colors.  
Note that $G$ has an $L$-forested-coloring with $C(i)$
is an independent set for $1 \leq i \leq 2d - k$
if and only if $(G,H)$ has a DP-$F$-coloring.
Since $G$ is DPG-$[k,2]$-colorable, we have the desired result.\\
\end{proof}

We use the concept of DPG-$[k,2]$-colorable graph to generalize these three results on list coloring and DP-coloring.  

\begin{theorem} \label{main01} \cite{Tho}
Every planar graph is $5$-choosable.
\end{theorem}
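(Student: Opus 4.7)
My plan is to prove Theorem~\ref{main01} by the classical Thomassen induction, which requires strengthening the statement so that the inductive hypothesis carries enough boundary information. Specifically, I would prove: if $G$ is a plane near-triangulation with outer cycle $C = v_1 v_2 \cdots v_k$ and $L$ is a list assignment with $|L(v_1)| = |L(v_2)| = 1$, $L(v_1) \neq L(v_2)$, $|L(v_i)| \geq 3$ for $3 \leq i \leq k$, and $|L(v)| \geq 5$ for every interior $v$, then $G$ is $L$-colorable. Theorem~\ref{main01} then follows by embedding any planar $G$ in the plane, adding edges to form a near-triangulation (which only makes $L$-colorability harder), and fixing a single color from the list of each of two adjacent outer vertices; the remaining outer vertices still have lists of size $5 \geq 3$.

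The induction is on $|V(G)|$. The base case $|V(G)| = 3$ is immediate, since the one remaining vertex $v_3$ has at least three colors in its list and only two are forbidden. For the inductive step, if the outer cycle $C$ has a chord $v_i v_j$, I would split $G$ along the chord into two near-triangulations $G_1, G_2$ sharing the edge $v_i v_j$, color the side $G_1$ containing the precolored pair inductively, and then use the resulting colors of $v_i, v_j$ as the new precolored pair to color $G_2$ inductively. Both applications satisfy the hypotheses, because every vertex on $C$ other than $v_1, v_2$ already has a list of size $\geq 3$ and the two newly precolored vertices $v_i, v_j$ receive distinct colors from a proper coloring of $G_1$.

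If $C$ has no chord, I would focus on the neighbor $v_k$ of $v_1$ on $C$. A structural argument using the chordlessness of $C$ and the fact that every interior face is triangular shows that the neighbors of $v_k$ other than $v_1$ and $v_{k-1}$ form an internal path $u_1, \dots, u_m$ with $v_1 u_1, u_i u_{i+1}, u_m v_{k-1} \in E(G)$. I would pick two colors $\alpha, \beta \in L(v_k) \setminus L(v_1)$ (available since $|L(v_k)| \geq 3$ and $|L(v_1)| = 1$), delete them from each $L(u_i)$, and apply induction to $G - v_k$, whose outer cycle is now $v_1 v_2 \cdots v_{k-1} u_m \cdots u_1$. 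Each $u_i$ still has a list of size $\geq 3$, the precoloring of $v_1, v_2$ is unchanged, and by construction no $u_i$ is colored $\alpha$ or $\beta$. Since $v_1$ also avoids $\alpha, \beta$ and $v_{k-1}$ uses only one color, at least one of $\alpha, \beta$ remains free for $v_k$.

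The main obstacle will be verifying the structural claim about $v_k$'s neighborhood in the no-chord case: one must argue carefully from the near-triangulation and chordlessness hypotheses that the interior neighbors of $v_k$ really form a path from $v_1$ to $v_{k-1}$ along consecutive triangular faces, and that $G - v_k$ retains a simple cycle as its new outer face so the inductive hypothesis applies. Everything else is list-size bookkeeping, and the induction closes cleanly; no new ideas beyond the standard Thomassen argument are needed for this particular statement, though the technique will reappear in a more delicate form when the author adapts it to prove a DPG-$[5,2]$-colorability strengthening.
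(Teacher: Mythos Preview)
Your proposal is correct and reproduces Thomassen's original argument, which is exactly what the paper cites for Theorem~\ref{main01}; the paper does not give an independent proof of this statement but only of its strengthening Theorem~\ref{main1}. You also correctly anticipate that the paper's proof of Theorem~\ref{main1} follows the same chord/no-chord induction scheme, with the extra work going into tracking residual functions $f^*$ and building strictly $F$-degenerate orders rather than just counting list sizes.
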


\begin{theorem} \label{main02} \cite{SiNa2}
Let $\mathcal{A}$ be the family of planar graphs
without pairwise adjacent $3$-, $4$-, and $5$-cycles.
If $G \in \mathcal{A}$ contains a $3$-cycle $C,$
then each precoloring of $C$ can be extended to a DP-$4$-coloring of $G.$  
\end{theorem}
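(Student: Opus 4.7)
The plan is to prove a stronger DPG-$[4,2]$ statement, which by part~(1) of the preceding Lemma immediately implies Theorem~\ref{main02}. Specifically, I would aim to show: \emph{let $G\in\mathcal{A}$ be a plane graph whose outer face is bounded by a $3$-cycle $C=v_1v_2v_3$, let $H$ be a cover of $G$ arising from an assignment $L$ with $\bigcup_{v}L(v)\subseteq\{1,\dots,s\}$, and let $f_1,\dots,f_s$ satisfy $f_i(v)\le 2$ and $|f(v)|\ge 4$ for every $v\notin V(C)$; then any choice of one representative from each $\{v_j\}\times L(v_j)$ extends to a DP-$F$-coloring of $(G,H)$.} This recasting is the natural merger of the hypotheses in \cite{Tho} and \cite{SiNa2} and makes the induction self-supporting.

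I would proceed by a minimum counterexample on $|V(G)|+|E(G)|$, combining the strategies of \cite{Tho} and \cite{SiNa2}. Preliminary reductions show that $G$ is $2$-connected and every bounded face is bounded by a cycle; a separating triangle is split off by applying induction to the interior and the exterior, each carrying a precolored triangle (the class $\mathcal{A}$ is closed under taking such pieces). Next I would rule out easy local configurations: any interior vertex $v$ with $d(v)<|f(v)|$ can be removed, the induction hypothesis applied, and $v$ colored last in a strictly $F$-degenerate order; a neighbor $u\notin V(C)$ of some $v_j\in V(C)$ has the matching constraint from the precoloring of $v_j$ absorbed into $f_i(u)$, producing a smaller instance that still fits the inductive framework.

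Once local reductions are exhausted, the structural heart of the argument is a discharging scheme. I would assign initial charge $d(v)-4$ to each vertex $v$ and $\ell(f)-4$ to each face $f$; Euler's formula yields total charge $-8$. The hypothesis that no two of the $3$-, $4$-, $5$-cycles are pairwise adjacent prevents triangles and short faces from clustering, guaranteeing enough donors of positive charge (large faces and high-degree vertices) along the boundary of every short face. Following the rules in \cite{SiNa2}, I would redistribute charges so that every non-$C$ vertex and every face ends with nonnegative charge, producing the desired contradiction with the negative total.

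The main obstacle will be the cover-level bookkeeping in each reduction. Unlike list coloring, DP-coloring does not allow one to freely rename colors, so when an interior vertex $v$ is deleted and its constraints pushed onto neighbors, one must carefully track how the matchings in $H$ are transformed and how the functions $f_i$ on the neighbors are updated, verifying both that $f'_i(w)\le 2$ and $|f'(w)|\ge 4$ persist outside the precolored triangle and that the reduced graph still lies in $\mathcal{A}$. Ensuring that any DP-$F'$-coloring of the reduced cover lifts back to a valid DP-$F$-coloring of $(G,H)$ via the strictly $F$-degenerate ordering, rather than finding a single clever reducible configuration, is where the bulk of the technical effort is concentrated.
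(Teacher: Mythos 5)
Your outline coincides in skeleton with the paper's route: both prove the stronger DPG-$[4,2]$ extension statement (the paper's Theorem \ref{main2}), show that a minimal counterexample has no separating $3$-cycle (Lemma \ref{sep}) and minimum degree $4$ off the precolored triangle (Lemma \ref{min2}), and then import the forbidden configurations and discharging rules wholesale from \cite{SiNa2}. One small correction to your strengthened statement: you cannot extend \emph{any} choice of one representative from each $\{v_j\}\times L(v_j)$; the precoloring must itself be a DP-$F$-coloring of $C$ (for DP-$4$-coloring, an independent set in the cover restricted to $C$), since three pairwise adjacent representatives on the triangle extend to nothing.

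The genuine gap is the step you explicitly defer as ``the main obstacle'': the reducibility of the configurations from \cite{SiNa2} at the cover level under the weaker hypothesis $f_i(v)\le 2$. This is the only place where the paper does more than cite \cite{SiNa2}, and it is Lemma \ref{mainlemma}. In each configuration one orders the fan $v_1,\dots,v_m$ so that after the outside is colored the hub has residual weight $|f^*(v_1)|\ge 3$, the middle vertices can be colored greedily, and the last vertex has $|f^*(v_m)|=2$; the danger is that the two remaining units at $v_m$ sit in two distinct colors, each matched to a color of $v_1$ or $v_{m-1}$, so that after greedily coloring $v_1,\dots,v_{m-1}$ both options at $v_m$ are blocked. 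In pure DP-coloring one escapes by picking a third color at $v_1$ not matched to either surviving option at $v_m$; but with $f_i\le 2$ the hub may have only \emph{two} admissible colors (weights $2+1$), so that escape is unavailable. The paper's fix is to choose at $v_1$ a color $1$ with $f^*_1(v_1)>f^*_1(v_m)$, and, in the bad case $f^*_1(v_1)=2$, $f^*_1(v_m)=1$, to place $(v_m,1)$ at the \emph{bottom} of the strictly $F$-degenerate order rather than the top: $(v_m,1)$ then has no neighbors below it, and $(v_1,1)$ can absorb $(v_m,1)$ as a below-neighbor precisely because its residual value is $2$. Without this reordering idea the reducibility claims, and hence the entire discharging argument, do not go through in the DPG-$[4,2]$ setting.
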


\begin{theorem} \label{main03} \cite{DP3}
Let $G$ be a planar graph without cycles of lengths $\{4,a, b, 9\}$
where $a$ and $b$ are distinct values from $\{6, 7, 8\}.$
Then $G$ is DP-$3$-colorable.
\end{theorem}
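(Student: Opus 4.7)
The plan is to prove the stronger statement that every planar graph $G$ without cycles of lengths in $\{4,a,b,9\}$ (with distinct $a,b\in\{6,7,8\}$) is DPG-$[3,2]$-colorable; by the preceding lemma applied with $k=3$, this yields DP-$3$-colorability as the desired corollary. The strategy parallels the minimum counterexample plus discharging approach that is standard for such planar DP-coloring results (for instance in \cite{DP3}), but executed in the DP-$F$ setting so that one obtains the stronger conclusion without extra effort.

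Suppose for contradiction that $G$ is a counterexample minimizing $|V(G)|+|E(G)|$: there exist a cover $H$ and functions $f_1,\dots,f_s:V(G)\to\{0,1,2\}$ with $|f(v)|\geq 3$ for every $v$ such that $(G,H)$ has no DP-$F$-coloring. By minimality, $G$ must be $2$-connected and have minimum degree at least $3$. Indeed, if $d(v)\leq 2$, then by minimality $(G-v,H-v)$ has a DP-$F$-coloring, and since $v$ has at most $2$ colored neighbors while $\sum_{i\in L(v)}f_i(v)=|f(v)|\geq 3$, there is some $i\in L(v)$ with $f_i(v)$ strictly exceeding the number of neighbors whose chosen vertex in $H$ lies opposite $(v,i)$, so the coloring extends. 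I would then adapt the reducible configurations used in the proof of Theorem \ref{main03} in \cite{DP3}: local clusters of low-degree vertices incident with $3$- or $5$-faces that can be deleted and whose DP-$F$-coloring can be extended. The translation is conceptually routine because allowing $f_i(v)\leq 2$ (rather than the DP-coloring constraint $f_i(v)\in\{0,1\}$) only gives more slack in each extension argument, permitting up to one conflicting neighbor per color.

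With the reducible configurations in hand, apply discharging to a plane embedding. Assign initial charge $\mu(v)=d(v)-4$ to each vertex and $\mu(F)=\ell(F)-4$ to each face; Euler's formula gives total charge $-8$. Because $G$ has no $4$-cycles, no $9$-cycles, and misses two of $\{6,7,8\}$, every face length lies in $\{3,5\}\cup(\{6,7,8\}\setminus\{a,b\})\cup\{10,11,\dots\}$; in particular exactly one length in $\{6,7,8\}$ is available. The discharging rules redistribute charge from long faces and higher-degree vertices to $3$- and $5$-faces, using the forbidden cycle lengths to bound how closely such short faces can cluster or share boundaries. Excluded configurations from the previous step rule out the remaining bad cases, and every element ends with nonnegative final charge, contradicting the total $-8$.

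The main obstacle will be the discharging bookkeeping: because the set of allowed face lengths from $\{6,7,8\}$ varies with the choice of $\{a,b\}$, the neighborhood structure around $3$- and $5$-faces differs between the three subcases $\{a,b\}\in\{\{6,7\},\{6,8\},\{7,8\}\}$, so some case splitting is inevitable. A secondary difficulty is ensuring that the list of reducible configurations is rich enough to forbid all problematic local patterns that appear in each of the three regimes, while remaining verifiable by the same extension template used for DP-coloring.
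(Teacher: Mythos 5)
Your overall route is the same as the paper's: strengthen the statement to DPG-$[3,2]$-colorability (Theorem \ref{main3}), take a minimal counterexample, establish minimum degree $3$ (the paper's Lemma \ref{min1}), show that the reducible configurations of Lemma 2.3 in \cite{DP3} remain reducible in the DP-$F$ setting, and then reuse the discharging of \cite{DP3} verbatim. The structure is right, and the discharging step genuinely does carry over unchanged, since it only uses the forbidden cycle lengths and the excluded configurations.

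The gap is in your claim that translating the reducibility arguments is ``conceptually routine because allowing $f_i(v)\leq 2$ \dots only gives more slack in each extension argument.'' This is where the paper has to do its real work, and the direction of the difficulty is the opposite of what you assert. Under $|f(v)|\geq 3$ with $f_i(v)\in\{0,1,2\}$, a vertex may have only \emph{two} colors available (say $f_1(v)=2$, $f_2(v)=1$), which is strictly less choice than the three distinct colors available in DP-$3$-coloring. The reducibility proofs in \cite{DP3} color the configuration $K$ vertex by vertex and rely on the last vertex $v_m$ retaining, after its external and already-colored internal neighbors are accounted for, at least one unblocked color among several distinct options; in the DPG-$[3,2]$ setting the residual $|f^*(v_m)|=2$ can be split as two colors of capacity $1$, and its two remaining neighbors in $K$ can block both. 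Greedy extension then fails. The paper's Lemma \ref{mainlemma} resolves exactly this case, not by ``slack per color'' but by an ordering trick: when $f^*_1(v_1)=2$ and the conflict at $v_m$ cannot be avoided, it places $(v_m,1)$ at the \emph{bottom} of the strictly $F$-degenerate order, so that the conflict between $(v_m,1)$ and $(v_1,1)$ is charged to $v_1$, whose capacity $2$ absorbs it. Without this lemma (or an equivalent device exploiting the order-dependent meaning of $f_i(v)=2$), your plan stalls at the reducibility step; with it, the rest of your outline matches the paper.
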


Using DPG-$[k,2]$-colorability,
we modify the proof of Theorems \ref{main01}, \ref{main02},
and \ref{main03} to obtain the following main results.

\begin{theorem}\label{main1}
Every planar graph $G$ is DPG-$[5,2]$-colorable.
In particular, we have the followings.\\
(1) $G$ is $5$-choosable \cite{Tho}. \\
(2) $G$ is $5$-DP-colorable \cite{DP}.\\
(3) If $L$ is a $4$-assignment of $G$ with colors $i,j,$ and $k,$
then $G$ has an $L$-forested-coloring with $C(i), C(j),$ and $C(k)$ are independent sets.\\
(4) If $L$ is a $3$-assignment of $G$ with a color $i,$
then $G$ has an $L$-forested-coloring with $C(i)$ is an independent set.\\
(5) $G$ is DP-vertex-$3$-arborable.\\
(6) $G$ is $(f_1,\ldots,f_s)$-partitionable if $|f(v)| \geq 5$ and
$f_i(v) \in \{0,1,2\}$  for every vertex $v$ and every $i$ with $1 \leq i \leq s.$
\end{theorem}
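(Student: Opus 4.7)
The plan is to prove the main claim that every planar graph $G$ is DPG-$[5,2]$-colorable; items (1)--(6) then follow immediately from the preceding Lemma. Conclusions (1) and (2) are item~(1) of that Lemma instantiated with $k=5$, which says $G$ is DP-$5$-colorable and hence $5$-choosable. Conclusion (5) is item~(2) of the Lemma with $k=5$, since $\lceil 5/2\rceil=3$. Conclusions (3) and (4) are item~(3) of the Lemma with $d=4$ and $d=3$ respectively, for which $2d-k$ is $3$ and $1$. Conclusion (6) is obtained by taking the cover $H$ whose matching at each edge identifies identically-named colors, under which a DP-$F$-coloring of $(G,H)$ coincides with an $(f_1,\ldots,f_s)$-partition of $G$.

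To prove DPG-$[5,2]$-colorability I intend to follow Thomassen's inductive proof of Theorem~\ref{main01}, lifted into the DP-$F$ framework. Specifically, I would prove by induction on $|V(G)|$ the standard Thomassen-type strengthening: if $G$ is a plane graph whose outer face is bounded by a cycle $C$, if $x,y$ are two adjacent vertices of $C$ whose $f$-values are single-coordinate indicators $f_{i_x}(x)=f_{i_y}(y)=1$ with $(x,i_x)\not\sim(y,i_y)$ in the cover $H$, if $|f(v)|\geq 3$ for every $v\in V(C)\setminus\{x,y\}$, and if $|f(v)|\geq 5$ for every interior vertex (with $f_i(v)\in\{0,1,2\}$ throughout), then $(G,H)$ has a DP-$F$-coloring containing the reps $(x,i_x)$ and $(y,i_y)$. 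Since adding edges to $G$ and enlarging the cover $H$ arbitrarily can only make DP-$F$-coloring harder, we may further assume $G$ is a $2$-connected near-triangulation.

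The induction follows Thomassen's two-case dichotomy. If $C$ has a chord $uw$, split $G$ along $uw$ into two pieces and apply the hypothesis first to the piece containing $\{x,y\}$, then to the other piece using the determined reps $(u,c_u),(w,c_w)$ as its new precolored pair. Otherwise, pick $v\in V(C)\setminus\{x,y\}$ adjacent to $x$ on $C$, let $w$ be its other $C$-neighbor, and let $u_1,\ldots,u_p$ be its interior neighbors. Using $|f(v)|\geq 3$ and $f_i(v)\leq 2$, select two distinct colors $\alpha,\beta$ at $v$ with $f_\alpha(v),f_\beta(v)\geq 1$, chosen so that neither $(v,\alpha)$ nor $(v,\beta)$ is matched in $H$ to $(x,i_x)$; for each interior $u_j$ and each $\gamma\in\{\alpha,\beta\}$, write $\gamma_j$ for the (at most one) color at $u_j$ matched to $(v,\gamma)$ in $H$, and define $f'_{\gamma_j}(u_j):=\max\{0,f_{\gamma_j}(u_j)-1\}$, leaving all other values of $f'$ equal to $f$. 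Each $u_j$ loses at most $2$ from $|f|$, so $|f'(u_j)|\geq 3$, and the inductive hypothesis applies to $G-v$. Prepending to the resulting order the rep $(v,\alpha)$ or $(v,\beta)$ that is not matched to the inductive $(w,c_w)$ (at most one of them is, by the matching condition on $H$) produces the required DP-$F$-coloring of $(G,H)$.

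The main obstacle is the bookkeeping. One must verify that the two candidate colors $\alpha,\beta$ can always be selected; since $f_i(v)\leq 2$, the set $\{i:f_i(v)\geq 1\}$ has at least two elements, but when it has exactly two and one is matched to $(x,i_x)$, the candidate with $f_\alpha(v)=2$ must be admitted even if matched to $(x,i_x)$, using the slack $f_\alpha(v)=2$ to absorb the additional lower neighbor from $(x,i_x)$ in the order. One must also verify that prepending $(v,\alpha)$ respects the strictly-$F$-degenerate order at every later rep, which reduces to the observation that each $(u_j,c_j)$ with $c_j=\alpha_j$ acquires exactly one new lower neighbor in $H$, exactly balanced by the reduction $f'_{\alpha_j}(u_j)=f_{\alpha_j}(u_j)-1$. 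These adjustments, absent in Thomassen's $\{0,1\}$-valued proof, make essential use of the bound $f_i(v)\leq 2$ that defines DPG-$[5,2]$-colorability.
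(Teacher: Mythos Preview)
Your outline is the paper's: a Thomassen-style induction on near-triangulations, strengthened so that two adjacent outer vertices are precolored and the remaining outer vertices satisfy $|f|\ge 3$, with the usual chord/no-chord dichotomy. Two pieces of the bookkeeping, however, do not go through as you have written them.

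In the chord case, after coloring the piece containing $\{x,y\}$ the reps $(u,c_u),(w,c_w)$ at the chord endpoints need not satisfy $(u,c_u)\not\sim(w,c_w)$ in $H$: with $f_i\le 2$ the first piece may legitimately select adjacent reps when one of them has $f$-value $2$. Your hypothesis therefore does not apply to the second piece. The paper handles this via Lemma~\ref{front}, which (by temporarily replacing $f$ at the two vertices by indicators and deleting the offending $H$-edge) shows that any DP-$F$-precoloring of a subgraph can be pushed to the bottom of a strictly $F$-degenerate order; this is what lets the two pieces be spliced.

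In the no-chord case your uniform ``pick two colors and reduce each by $1$'' misses the configuration where the support of $f$ at $v$ is exactly $\{a,b\}$ with $f_a(v)=1$, $f_b(v)=2$, and $(v,a)$ matched to $(x,i_x)$. Your stated fix (``admit the candidate with $f_\alpha(v)=2$ even if matched'') is vacuous here, since the $f=2$ candidate $b$ is already unmatched and you still lack a second usable color. Even in the case your fix does target (the matched color has $f=2$), it clashes with ``prepending'': putting $(v,\alpha)$ before $(x,i_x)$ gives $(x,i_x)$ a new lower neighbour while $f_{i_x}(x)=1$, so the slack is on the wrong side. The paper resolves this by splitting on the maximum residual value at $v$: if some colour has residual $\ge 2$ (Case~2.1), set that single colour to $0$ at the interior neighbours (still $|f'|\ge 3$ since $f_i\le 2$) and place $(v,\cdot)$ after $(v_1,a),(v_p,b)$; only when every residual equals $1$ (Case~2.2) does it reduce two colours by $1$ each, which is your scheme.
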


\begin{theorem}\label{main2}
Let $G \in \mathcal{A}$ contains a $3$-cycle $C_0.$  
Let $|f(v)| \geq k$ and $f_i(v) \leq 2$ for $1 \leq i \leq s.$
Then every DP-$F$-coloring on $C_0$  can be  
extended to a DP-$F$-coloring on $G.$  
In particular, we have the followings.\\
(1) $G$ is DP-$4$-colorable \cite{SiNa2}.\\
(2) If $L$ is a $3$-assignment of $G$ with colors $i$ and $j,$
then $G$ has an $L$-forested-coloring
with $C(i)$ and $C(j)$ are independent sets.\\
(3) $G$ is DP-vertex-$2$-arborable.\\
(4) $G$ is $(f_1,\ldots,f_s)$-partitionable if $|f(v)| \geq 4$ and
$f_i(v) \in \{0,1,2\}$  for every vertex $v$ and every $i$ with $1 \leq i \leq s.$\\  
Note that (1), (2), and (3) still hold even when $G$ has a corresponding precoloring on $C_0.$
\end{theorem}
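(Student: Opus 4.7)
The plan is to adapt the discharging and reduction argument from \cite{SiNa2} that handled Theorem \ref{main02}. Throughout we take $k=4$, since items (1)--(4) all correspond to DPG-$[4,2]$-colorability with a precolored $3$-cycle $C_0$. The argument is by contradiction on a counterexample $(G,H,F,C_0)$ with $|V(G)|+|E(G)|$ minimum: we assume a DP-$F$-coloring is given on $C_0$ but does not extend, and we derive a contradiction by locating a structurally forced configuration in $G$ that is in fact reducible.

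First I would carry out the standard minimality reductions. These include: $G$ is $2$-connected; $C_0$ bounds a face (we may take the outer face); no separating triangle other than $C_0$ exists, since one would split $G$ into two smaller instances to which induction applies; and every vertex $v \notin V(C_0)$ satisfies $d_G(v) \ge |f(v)|$, because otherwise removing $v$, applying minimality, and then choosing an index $i$ with $f_i(v)$ strictly exceeding the number of already-colored $H$-neighbors of $\{v\}\times L(v)$ mapped into index $i$ extends the coloring to $v$. The slack provided by allowing $f_i(v)\le 2$ (rather than just $\le 1$) enters here: a color index $i$ whose $f_i(v)=2$ can absorb up to one already-assigned neighbor of the same label. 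Next I would invoke the structural description of $\mathcal{A}$. Because $G$ is planar with no pairwise adjacent $3$-, $4$-, and $5$-cycles, a standard discharging argument (initial charge $d(v)-4$ on vertices and $\ell(f)-4$ on faces, with redistribution rules tuned as in \cite{SiNa2}) forces a concrete list of local configurations to occur in $G \setminus C_0$, such as a $3$-vertex incident to faces of controlled length, or a particular $4$-face/$5$-face pattern.

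For each such configuration I would perform the standard Thomassen-type reduction: delete one or two vertices $v$ (or identify a pair of boundary vertices through a matching edge of $H$), obtaining a smaller plane graph $G' \in \mathcal{A}$ with a cover $H'$ and a function $F'$ in which the lists and constraints on former neighbors of $v$ have been reduced to reflect the removed vertex. In the DP-$F$ setting this reduction is done pointwise: for a neighbor $u$ of $v$ and an index $i$, we decrease $f_i(u)$ by one precisely when $(u,i)$ is $H$-matched to some $(v,j)$ with $f_j(v)\ne 0$, being careful that $f_i(u)$ never drops below $0$. The minimality assumption furnishes a DP-$F'$-coloring of $(G',H')$ extending the precoloring of $C_0$, and one checks in each case that the corresponding strictly $F'$-degenerate order, followed by one additional step coloring $v$ from an index $i$ whose remaining capacity $f_i(v)$ exceeds the number of already-colored $H$-neighbors, yields a strictly $F$-degenerate order for the whole of $G$.

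The main obstacle I anticipate is verifying reducibility in the DP-$F$ setting rather than the DP-setting of \cite{SiNa2}. Two issues require care: (a) the matchings in $H$ need not behave like identity matchings, so one must track which $(u,i)$ is matched to which $(v,j)$ and ensure that the reduced cover $H'$ is a legitimate cover of $G'$ after any vertex identifications; and (b) because $f_i$ can equal $2$, ``too many'' neighbors of the same index at $v$ is no longer automatically fatal, so the accounting for how many neighbors land in each index must use $f_i(v)$ rather than just $\{0,1\}$ indicator values. Once this bookkeeping is set up uniformly, every reducible configuration used in the DP-$4$ proof remains reducible here, because at worst one pays a unit in $f_i(u)$ for each $H$-matched pair. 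Having established the extension statement for DP-$F$-colorings, the specializations (1)--(4) are then immediate consequences: (1) follows from $f_i \in \{0,1\}$, (2) from $f_i \in \{0,1\}$ on two chosen colors and $f_i \in \{0,2\}$ elsewhere, (3) from $f_i \in \{0,2\}$, and (4) is the DP-$F$ statement restricted to its list-coloring avatar.
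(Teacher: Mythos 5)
Your overall architecture (minimal counterexample, no separating triangles, minimum degree $4$ off $C_0$, discharging from \cite{SiNa2} to force local configurations, then reducibility) matches the paper's, but the reducibility step --- the only genuinely new content needed here --- is where your proposal has a real gap. The configurations forced by the discharging in \cite{SiNa2} are not single vertices but clusters: a cycle $x_1x_2\ldots x_m$ with internal chords from a hub $x_1$, in which the rim vertices have degree $4$. The reduction must delete the whole cluster $K$, extend a DP-$F$-coloring of $G-K$, and color $K$ in the cyclic order $v_1,\ldots,v_m$; the last vertex $v_m$ then has \emph{two} already-colored neighbors inside $K$ (namely $v_1$ and $v_{m-1}$) and only $|f^*(v_m)|\ge 2$ units of residual capacity. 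Your plan --- take the strictly $F'$-degenerate order from minimality and append ``one additional step coloring $v$ from an index whose remaining capacity exceeds the number of colored neighbors'' --- is pure greedy and fails precisely at $v_m$: both capacity units can be consumed, e.g.\ when a single index $i$ has $f^*_i(v_m)=2$ and $(v_m,i)$ is matched to both chosen elements at $v_1$ and $v_{m-1}$, or when two indices each with $f^*=1$ are each killed by one of the two neighbors. In the list/DP-$4$ setting one escapes by choosing the color of $v_1$ to spare one of $v_m$'s two remaining colors, but with $f_i\le 2$ this no longer suffices, and your remark that ``at worst one pays a unit in $f_i(u)$ for each $H$-matched pair'' is exactly the assertion that needs proof rather than a proof.

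The paper's Lemma \ref{mainlemma} is the missing ingredient. It first normalizes to $|f^*(v_m)|=2$ and picks the index $1$ for $v_1$ so that $f^*_1(v_1)>f^*_1(v_m)$ (possible since $|f^*(v_1)|\ge 3$). If $f^*_1(v_m)=0$, coloring $(v_1,1)$ costs $v_m$ nothing and greedy finishes. Otherwise $f^*_1(v_1)=2$ and $f^*_1(v_m)=f^*_2(v_m)=1$; after greedily coloring $v_2,\ldots,v_{m-1}$, if $(v_{m-1},2)$ was chosen then $(v_m,2)$ is blocked and $(v_m,1)$ has the colored neighbor $(v_1,1)$, so appending it at the end violates $f^*_1(v_m)=1$. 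The fix is to insert $(v_m,1)$ at the \emph{front} of the strictly $F^*$-degenerate order, which is legal because $(v_1,1)$ can absorb the extra lower-order neighbor thanks to $f^*_1(v_1)=2$. This reordering of the degenerate order, rather than a final greedy step, is the crux, and the same ``push precolored elements to the front'' device (the paper's Lemma \ref{front}) is also what legitimizes your gluing of the two sub-instances at a separating triangle, which is not automatic for strictly $F$-degenerate orders. Finally, the vertex-identification variant you mention is not used in the paper and would need a separate argument that the identified graph stays planar and in $\mathcal{A}$.
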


\begin{theorem}\label{main3}
Let $G$ be a planar graph without cycles of lengths $\{4,a, b, 9\}$
where $a$ and $b$ are distinct values from $\{6, 7, 8\}.$
Then $G$ is DPG-$[3,2]$-colorable.
In particular, we have the followings.\\
(1) $G$ is DP-$3$-colorable \cite{DP3}.\\
(2) $G$ is DP-vertex-$2$-arborable.\\
(3) If $L$ is a $2$-assignment of $G$ with a color $i,$
then $G$ has an $L$-forested-coloring with $C(i)$ is an independent set.\\
(4) $G$ is $(f_1,\ldots,f_s)$-partitionable if $|f(v)| \geq 3$ and
$f_i(v) \in \{0,1,2\}$  for every vertex $v$ and every $i$ with $1 \leq i \leq s.$
 \end{theorem}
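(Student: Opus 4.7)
The plan is to recast the proof of Theorem \ref{main03} from \cite{DP3}, which establishes DP-$3$-colorability under the same cycle restrictions, into the broader DPG-$[3,2]$ framework. Consequences (1)--(4) will then follow immediately by applying Lemma~1 (parts (1)--(3)) together with the definition of DPG-$[3,2]$-colorability for (4).

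Let $G$ be a minimum counterexample in $|V(G)|$: a planar graph without cycles of lengths in $\{4,a,b,9\}$, equipped with a cover $H$ for some $k$-assignment $L$ and a function tuple $F=(f_1,\ldots,f_s)$ with $f_i(v)\in\{0,1,2\}$ and $|f(v)|\geq 3$ for every $v$, such that $(G,H)$ admits no DP-$F$-coloring while every proper subgraph does. The first step is to re-derive the reducible configurations of \cite{DP3} under this wider hypothesis. Each reduction uses the same template: delete a small set $S\subseteq V(G)$, apply minimality to $G-S$ with the naturally induced cover and functions, and then extend the representative set by appending the vertices of $S$ at the end of the strictly $F$-degenerate order. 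The extension step at a last-placed vertex $v$ succeeds provided there is a color $i\in L(v)$ for which strictly fewer than $f_i(v)$ of the already-placed neighbors of $v$ are matched to $(v,i)$ in $H$. Because $f_i$ is now allowed to be $2$ rather than capped at $1$, the constraint is strictly weaker than in the DP-$3$ setting; in particular, degree-$\leq 2$ vertices are trivially reducible since $|f(v)|\geq 3$ exceeds the total number of potential blockages.

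With the reducibility analysis upgraded, I would import the discharging scheme of \cite{DP3} essentially verbatim. Initial charges $d(v)-4$ on each vertex $v$ and $\ell(f)-4$ on each face $f$ sum to $-8$ by Euler's formula. The redistribution rules (face-to-vertex, and possibly face-to-face) depend on the chosen pair $(a,b)\in\{6,7,8\}$, and the unavoidability lemma asserts exactly that in the absence of the reducible configurations every vertex and face ends with nonnegative charge --- yielding the required contradiction. Since the absence of the forbidden cycle lengths and of the reducible configurations are the only ingredients used in the discharging verification in \cite{DP3}, and both survive the passage to DPG-$[3,2]$, the same local analysis discharges here as well.

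The main obstacle is the case-by-case audit of reducibility. For each configuration in \cite{DP3} one must bound the number of neighbors of $v$ whose $H$-matching sends them to a fixed $(v,i)$ and confirm that, summed over $i\in L(v)$ with $f_i(v)\geq 1$, at least one color retains a slot under the inequality ``fewer than $f_i(v)$ neighbors already placed hit $(v,i)$''. In the hardest cases --- typically those involving a vertex incident to several $3$-faces, or configurations where a long path of degree-$3$ vertices is unfolded and recolored --- one must verify that doubling the bound on $f_i$ does not accidentally create new dependencies on parts of the cover that were previously irrelevant. I expect the slack gained from $f_i(v)\leq 2$ (versus $f_i(v)\leq 1$) to absorb these worries, but a careful comparison with each reduction of \cite{DP3} is unavoidable, and this bookkeeping is where the bulk of the modification lies.
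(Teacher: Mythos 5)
There is a genuine gap in the reduction step. Your extension mechanism is to delete a set $S$, color $G-S$ by minimality, and then \emph{append} the vertices of $S$ at the end of the strictly $F$-degenerate order, succeeding whenever some color $i$ has fewer than $f_i(v)$ already-placed neighbors matched to $(v,i)$. For the five reducible configurations of Lemma 2.3 of \cite{DP3} this pure greedy-append scheme fails at the last vertex. Concretely, in the paper's Lemma \ref{mainlemma} the deleted subgraph $K=\{v_1,\ldots,v_m\}$ has a final vertex $v_m$ with residual weight $|f^*(v_m)|=2$ and exactly two $K$-neighbors $v_1$ and $v_{m-1}$; in the subcase where $f^*_1(v_m)=1$, $f^*_2(v_m)=1$ and the greedy coloring of $v_2,\ldots,v_{m-1}$ happens to select $(v_{m-1},2)$, both candidate colors of $v_m$ are each hit exactly once by a lower-ordered neighbor ($(v_1,1)$ hits $(v_m,1)$ and $(v_{m-1},2)$ hits $(v_m,2)$), so no color of $v_m$ satisfies the ``fewer than $f_i^*(v_m)$'' test when $v_m$ is placed last. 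The slack from allowing $f_i(v)=2$ does not rescue this: the blocked colors have residual value $1$, not $2$.

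The paper's way out is not greedy appending but a reordering device: since $|f^*(v_1)|\geq 3$ forces $f^*_1(v_1)=2>f^*_1(v_m)=1$, one inserts $(v_m,1)$ at the \emph{bottom} of the strictly $F^*$-degenerate order, where it has zero lower-ordered neighbors, and absorbs the resulting extra lower-ordered neighbor at $(v_1,1)$ using $f^*_1(v_1)=2$. This is the content of Case 2 of Lemma \ref{mainlemma}, and it is the one genuinely new idea needed to port the reducibility analysis of \cite{DP3} to the DPG-$[3,2]$ setting; conditions (i)--(iii) of that lemma are exactly what the labeled orderings of the five configurations are checked against. Your treatment of degree-$\le 2$ vertices (Lemma \ref{min1}), the import of the discharging from \cite{DP3}, and the derivation of consequences (1)--(4) from Lemma 1 all match the paper and are fine; but as written, the core extension step would fail on precisely the configurations that matter, and you have not identified the mechanism that repairs it.
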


\section{Helpful Tools}

Some definitions and lemmas which are used to prove the main results
are presented in this section.
Since we focus on DP-$[k,2]$-colorability,  
we assume from now on that
$f_i(v) \in \{0,1,2\}$  for every vertex $v$ and every $i$ with $1 \leq i \leq s.$
Furthermore, a DP-$F$-precoloring on a subgraph $G'$ is assumed to be a DP-$F$-coloring
restrict on $(G', H')$ where $H'$ is a cover $H$ restrict to $G'.$
\begin{defn}\label{residual}  
Let $R'$ be a DP-$F$-precoloring on an induced subgraph $G'$ of $G.$
The \emph{residual function} $f^*=(f^*_1,\ldots,f^*_s)$
for $G - G'$ is defined by    
$$f_i^*(v)=\max \{0, f_i(v)-|\{(x,j) \in R' : (v,i)(x,j)\in E(H)\}|\}$$
for each $v \in V(G)-V(G').$
\end{defn}

For conciseness, we simply say $R_2$ is a DP-$F^*$-coloring of $G - G'$
instead of that of $(G- G', H- H').$
From the above definition, we have the following fact.

\begin{lem}\label{extend}
Let $R'$ be a DP-$F$-precoloring of an induced subgraph $G'$  of $G$
and let $F^*=(f^*_1,\ldots,f^*_s)$ be a residual function of $G - G'.$
If $G-G'$ has a DP-$F^*$-coloring, then $(G,H)$ has a DP-$F$-coloring.
\end{lem}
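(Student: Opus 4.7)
The plan is to verify the lemma by explicitly combining the two given colorings into one and showing the concatenated order is strictly $F$-degenerate. Let $R'$ be the given DP-$F$-precoloring of $G'$, ordered as $\sigma'$ (a strictly $F$-degenerate order on $(G',H')$), and let $R_2$ be the given DP-$F^*$-coloring of $G-G'$, ordered as $\sigma_2$ (a strictly $F^*$-degenerate order on $(G-G',H-H')$). Define $R:=R'\cup R_2$, and let $\sigma$ be the order that places all elements of $R'$ (in the order $\sigma'$) before all elements of $R_2$ (in the order $\sigma_2$).

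First I would check that $R$ is a representative set of $(G,H)$. Since $R'$ is a representative set of $(G',H')$ it contains exactly one vertex from $\{v\}\times L(v)$ for each $v\in V(G')$, and similarly $R_2$ contributes exactly one vertex from $\{v\}\times L(v)$ for each $v\in V(G)-V(G')$. Together these account for exactly one vertex from each fiber of $H$, so $R$ is a representative set.

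Next I would verify the degeneracy condition for $\sigma$. For an element $(v,i)\in R'$, its neighbors in the $\sigma$-lower order are precisely its neighbors in the $\sigma'$-lower order (nothing from $R_2$ comes before it), and by hypothesis this number is less than $f_i(v)$. For an element $(v,i)\in R_2$, write $n_v^i:=|\{(x,j)\in R':(v,i)(x,j)\in E(H)\}|$. The neighbors of $(v,i)$ in the $\sigma$-lower order consist of all of its $R'$-neighbors, contributing exactly $n_v^i$, together with its $\sigma_2$-lower order neighbors in $R_2$, which number strictly less than $f^*_i(v)$ by hypothesis. Because $(v,i)\in R_2$, we must have $f^*_i(v)\geq 1$, so the maximum in the definition of $f^*_i(v)$ is attained by $f_i(v)-n_v^i$; hence the total count is at most $n_v^i+(f^*_i(v)-1)=f_i(v)-1<f_i(v)$, as required.

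There is no real obstacle here; the only subtlety is the case analysis in the definition of $f^*_i(v)$, which is handled by the observation that $(v,i)\in R_2$ forces $f^*_i(v)\geq 1$ and therefore forces $f^*_i(v)=f_i(v)-n_v^i$. Once this is noted the rest of the verification is immediate, so the proof is a short bookkeeping argument.
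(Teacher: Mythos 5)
Your proposal is correct and follows essentially the same route as the paper: concatenate the strictly $F$-degenerate order on $G'$ with the strictly $F^*$-degenerate order on $G-G'$, and use the observation that $(v,i)\in R_2$ forces $f^*_i(v)\geq 1$, hence $f^*_i(v)=f_i(v)-n_v^i$, to bound the lower-order neighbor count. The paper's argument is the same bookkeeping, phrased with $d$ in place of your $n_v^i$.
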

\begin{proof}
Let $R_1$ be a DP-$F$-precoloring of $G'$ with a strictly $F$-degenerate order
$S_1$ and $R_2$ be a DP-$F^*$-coloring of $G-G'$
with a strictly $F^*$-degenerate order $S_2.$ 
Then $R_1 \cup R_2$ is a representative set of $(G,H).$
We claim that the order $S$ obtained
from $S_1$ followed by $S_2$ is a strictly $F$-degenerate order  
of $R_1 \cup R_2.$ Consequently,
$R_1 \cup R_2$ is a DP-$F$-coloring of $(G,H).$
For $(v,i) \in R_1,$ the neighbors in the lower order of $S$ and that of $S_1$
are the same.
By the construction of $S_1,$
$(v,i)$ has less than $f_i(v)$ neighbors in the lower order of $S.$
Consider $(v,i) \in R_2.$
Suppose $(v,i)$ has $d$ neighbors in $R_1.$
Note that $f_i^*(v) \geq 1,$ otherwise $(v,i)$ cannot be chosen in $R_2.$
It follows that  $f^*_i(v) = f_i(v)-d$ by the definition of $f^*_i.$
Since $(v,i)$ has less than $f_i^*(v)$ neighbors in $R_2$ in the lower order of $S,$
$(v,i)$ has less than $f_i^*(v)+d = f_i(v)$ neighbors  
in the lower order of $S.$
Thus $S$ is a strictly $F$-degenerate order.
\end{proof}

Similarly, a partial DP-$F$-coloring $R'$ with a strictly $F$-degenerate order $S$
can be extended by a greedy coloring on a vertex $v$ with $|f^*(v)|\geq 1.$  
We add $(v,i)$ with $f_i(v) \geq 1$ to $R'.$
It can be seen that $S$ followed by $(v,i)$ is a strictly $F$-degenerate order.

The term \emph{minimal counterexample} is used 
for $(G,H)$ that is a counterexample and $|V(G)|$ is minimized.

\begin{lemma} \label{min1}
If $(G, H)$ is a minimal counterexample to Theorem \ref{main3},
then every vertex has degree at least $3.$
\end{lemma}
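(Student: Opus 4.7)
The plan is a standard reducibility argument. Suppose for contradiction that $(G,H)$ is a minimal counterexample to Theorem \ref{main3} and that there exists a vertex $v$ with $d(v) \leq 2$. Set $G' = G - v$ and let $H'$ be the restriction of $H$ to $G'$. Since removing a vertex cannot create any new cycle, $G'$ is still planar and still has no cycles of the forbidden lengths $\{4,a,b,9\}$, so $G'$ belongs to the same family. The restriction of $f$ to $V(G')$ continues to satisfy $|f(u)| \geq 3$ and $f_i(u) \leq 2$ for every $u \in V(G')$ and every $i$. Because $|V(G')| < |V(G)|$, minimality yields a DP-$F$-coloring $R'$ of $(G',H')$.

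Next I would extend $R'$ to $v$ using the residual function mechanism of Definition \ref{residual} and Lemma \ref{extend}. For each color $i$, the quantity $|\{(x,j)\in R' : (v,i)(x,j)\in E(H)\}|$ counts how many elements of $R'$ are adjacent to $(v,i)$ in $H$. For each neighbor $u \in N_G(v)$, the unique pair $(u,c_u) \in R'$ chosen for $u$ lies on at most one edge of the matching $E_H(\{u\}\times L(u), \{v\}\times L(v))$, so summing over $i$ the contribution of that neighbor is at most $1$. Therefore
\[
\sum_{i} f_i^*(v) \;\geq\; \sum_i f_i(v) \;-\; d(v) \;=\; |f(v)| - d(v) \;\geq\; 3 - 2 \;=\; 1.
\]
In particular, there exists a color $i$ with $f_i^*(v) \geq 1$.

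Appending $(v,i)$ to the end of the strictly $F^*$-degenerate order of $R'$ yields a DP-$F^*$-coloring of the one-vertex graph $G - G'$, which by Lemma \ref{extend} (or equivalently by the greedy extension remark immediately following it) combines with $R'$ to give a DP-$F$-coloring of $(G,H)$. This contradicts the assumption that $(G,H)$ is a counterexample, so every vertex of a minimal counterexample has degree at least $3$.

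There is no real obstacle here: the only point requiring care is the bookkeeping inequality $\sum_i f_i^*(v) \geq |f(v)| - d(v)$, which relies only on the matching structure of $H$ at $v$ and does not use the bound $f_i \leq 2$ at all. The hypothesis $|f(v)| \geq 3$ combined with $d(v) \leq 2$ is precisely what forces the residual function at $v$ to be nonzero.
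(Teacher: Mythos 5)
Your proposal is correct and follows exactly the paper's argument: delete the low-degree vertex, invoke minimality to color $G-v$, observe $|f^*(v)| \geq |f(v)| - d(v) \geq 3-2 = 1$, and finish by greedy extension. The extra justifications you supply (closure of the graph family under vertex deletion, the matching-based bookkeeping for the residual function) are sound elaborations of steps the paper leaves implicit.
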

\begin{proof}
Suppose to the contrary that a vertex $v$ has degree at most $2.$
By minimality, $G-v$ has a DP-$F$-coloring.
Now, $|f^*(v)|\geq |f(v)| - d(v) \geq 3 - 2 =1.$
Thus we can apply a greedy coloring to $v$ to complete the coloring.
\end{proof}

With a similar proof, one obtain the following lemma.

\begin{lemma} \label{min2}
If $(G,H)$ and a precolored $3$-cycle $C_0$
is a minimal counterexample to Theorem \ref{main2},
then every vertex not on $C_0$ has degree at least $4.$
\end{lemma}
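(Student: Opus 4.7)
The plan is to mirror the proof of Lemma \ref{min1}, simply raising the degree threshold from $2$ to $3$ because the relevant value of $k$ in Theorem \ref{main2} is $4$ (as evidenced by part (1) asserting DP-$4$-colorability and by the hypothesis $|f(v)| \geq 4$ in part (4)). I would argue by contradiction: assume that some vertex $v$ with $v \notin V(C_0)$ satisfies $d(v) \leq 3$ in a minimal counterexample $(G,H)$ together with a precolored $3$-cycle $C_0$.

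My first step is to pass to $(G - v, H - v)$, keeping the precoloring of $C_0$ intact. Since $\mathcal{A}$ is defined by planarity together with the absence of a specific adjacent-cycle configuration, and deleting a vertex neither creates cycles nor destroys planarity, we have $G - v \in \mathcal{A}$; moreover $C_0$ survives intact in $G - v$ because $v \notin V(C_0)$. By the minimality of $(G,H,C_0)$, the precoloring on $C_0$ extends to a DP-$F$-coloring $R'$ of $(G - v, H - v)$ with an associated strictly $F$-degenerate order $S'$.

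Next I would bound the residual function at $v$. For each neighbor $u$ of $v$, the set $E_H(\{u\}\times L(u),\{v\}\times L(v))$ is a matching by Definition \ref{cover}, so the unique chosen vertex $(u,j) \in R'$ is adjacent in $H$ to at most one vertex of $\{v\}\times L(v)$. Summing the reductions over the at most three neighbors of $v$ yields
\[
|f^*(v)| \;\geq\; |f(v)| - d(v) \;\geq\; 4 - 3 \;=\; 1,
\]
so some index $i$ satisfies $f_i^*(v) \geq 1$. The greedy-extension remark following Lemma \ref{extend} then lets me append $(v,i)$ to $S'$, producing a DP-$F$-coloring of $(G,H)$ that extends the precoloring of $C_0$ and contradicts the counterexample assumption.

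The only point worth flagging as a potential obstacle is the verification that $G - v$ inherits membership in $\mathcal{A}$ and that the precoloring of $C_0$ remains admissible on the smaller cover $H - v$; both are immediate from $v \notin V(C_0)$ and from the monotonicity of the forbidden cycle configurations under vertex deletion. The argument is otherwise a routine analogue of Lemma \ref{min1}.
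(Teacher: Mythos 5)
Your proposal is correct and is exactly the argument the paper intends: the paper proves Lemma \ref{min2} by saying it follows "with a similar proof" to Lemma \ref{min1}, namely deleting the low-degree vertex $v \notin V(C_0)$, extending the precoloring to $G-v$ by minimality, and greedily coloring $v$ since $|f^*(v)| \geq 4 - 3 = 1$. Your additional checks that $G-v$ stays in $\mathcal{A}$ and that $C_0$ survives deletion are exactly the (unstated) details needed to invoke minimality.
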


\begin{lemma} \label{front}
Let $G$ be a graph containing a subgraph $K$ with the following property:  
if  $H$ is a cover of $G$ and $f$ has $f(v)| \geq k$
for every vertex $v,$ then each DP-$F$-coloring of $K$
can be extended to that of $(G,H).$
Suppose $R_1$ is a DP-$F$-coloring of $K.$
Then there exists a DP-$F$-coloring of $(G,H)$ with
a strictly $F$-degenerate $S$ such that
the $|R_1|$ lowest-ordered elements are in $R_1.$
\end{lemma}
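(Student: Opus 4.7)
The plan is to reduce the conclusion to Lemma~\ref{extend} by producing a DP-$F^*$-coloring of $G-K$, where $F^*$ is the residual function determined by $R_1$ (Definition~\ref{residual}). If such a coloring $R_2$ exists with a strictly $F^*$-degenerate order $S_2$, and if $S_1$ is a strictly $F$-degenerate order witnessing that $R_1$ is a DP-$F$-coloring of $K$, then by Lemma~\ref{extend} the union $R_1\cup R_2$ is a DP-$F$-coloring of $(G,H)$ whose strictly $F$-degenerate order can be taken as $S_1$ followed by $S_2$; the first $|R_1|$ entries of this order are exactly the elements of $R_1$, which is the desired conclusion.

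Concretely, I would first fix $S_1$ and compute $F^*$ on $V(G)-V(K)$ via Definition~\ref{residual}. The hypothesis, applied to $(H,F)$ and the precoloring $R_1$, yields a DP-$F$-coloring $R\supseteq R_1$ of $(G,H)$. Set $R_2:=R\setminus R_1$. The key task is to equip $R_2$ with a strictly $F^*$-degenerate order on $G-K$, using the identity $f_i^*(v)=f_i(v)-|\{(x,j)\in R_1:(v,i)(x,j)\in E(H)\}|$ (valid whenever $f_i^*(v)\geq 1$) together with the strict $F$-degeneracy of the ambient order on $R$. Once this is in place, Lemma~\ref{extend} delivers the required DP-$F$-coloring of $(G,H)$ with $R_1$ at the front of its strictly $F$-degenerate order.

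The main obstacle is exactly this last step. Merely restricting a strictly $F$-degenerate order of $R$ to $R_2$ need not produce a strictly $F^*$-degenerate order, because an $R_1$-neighbor of a chosen $(v,i)\in R_2$ may lie after $(v,i)$ in the ambient order, thereby inflating the $R_1$-contribution to $(v,i)$'s earlier-neighbor count once $R_1$ is bubbled to the front. Bridging this gap is where the universality of the hypothesis is essential: by applying it to a suitably modified instance $(H',F')$---for instance, padding vertices of $K$ with inert dummy colors so that $|f'(v)|\geq k$ holds globally while the effective constraint on $V(G)-V(K)$ encodes $F^*$---one can guarantee an extension whose complement on $G-K$ is genuinely a DP-$F^*$-coloring, finishing Step~2 and hence the proof.
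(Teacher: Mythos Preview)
Your overall plan---reduce to Lemma~\ref{extend} by exhibiting a DP-$F^*$-coloring of $G-K$---is sound, and you correctly identify why merely restricting an ambient strictly $F$-degenerate order of $R$ to $R_2=R\setminus R_1$ can fail. The gap is that your proposed fix is not specified enough to work, and the hints you give point in the wrong direction. ``Padding vertices of $K$ with inert dummy colors'' does nothing to control where $R_1$-elements sit in the resulting order, and ``encoding $F^*$ on $V(G)\setminus V(K)$'' cannot be done directly: if you set $f'=f^*$ there, then $|f'(v)|\ge k$ may fail at vertices adjacent to $K$, so the hypothesis no longer applies.

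What the paper does is keep $f'=f$ on $V(G)\setminus V(K)$ and instead force the desired order behaviour from the $K$ side. Two moves are needed, and your sketch omits both. First, modify the matchings inside $K$ so that $R_1$ becomes an \emph{independent} set in the new cover $H'$; second, on each $v_j\in V(K)$ set $f'_i(v_j)=1$ for $1\le i\le k$ and $0$ otherwise. The first move is required so that $R_1$ is still a valid DP-$F'$-precoloring of $K$ once the precolored value drops to $1$; without it the hypothesis cannot be invoked. The second move is the real engine: since $f'_1(v_j)=1$, in any strictly $F'$-degenerate order $S'$ of the extension $R$ each $(v_j,1)\in R_1$ has \emph{no} earlier neighbor. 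Thus every $R_1$-neighbor of any $(u,i)\in R_2$ already precedes $(u,i)$ in $S'$, so moving $R_1$ to the front costs nothing---equivalently, the restriction of $S'$ to $R_2$ is strictly $F^*$-degenerate, which is exactly what your Lemma~\ref{extend} reduction needs. The extra colors $2,\ldots,k$ on $K$ are there only to keep $|f'(v_j)|\ge k$; in that limited sense they play the role of your ``dummies,'' but they are not what makes the argument go through.
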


\begin{proof}
Let $R_1$ be a DP-$F$-coloring of $K$ with a strictly $F$-degenerate order $S_1.$
By renaming the colors, we assume that $S_1$ has the order $(v_1,1),\ldots,(v_t,1).$
Let $H'$ be a cover of $G$ obtained from $H$ by modifying matchings
between colors in $R_1$ so that $R_1$ is independent.

Let $f'$ be obtained from $f$ by defining $f'_i(v_1)=\cdots =f'_i(v_t)=1$ if
$1 \leq i \leq k,$ otherwise $f'_i(v_1)=\cdots =f'_i(v_t)=0.$
Note that $|f'(v)| \geq k$ and $f'_i(v) \in \{0,1, 2\}$ for every vertex $v$
and every $i$ with $1 \leq i \leq k.$
By condition of $G$ and $K,$ $(G,H')$ has a DP-$f'$-coloring $R$ with
a strictly $f'$-degenerate order $S'.$
Let $S$ be obtained from $S'$ by moving $(v_1,1),\ldots,(v_t,1)$ to be in
the lowest order.
We claim that $R$ is a DP-$F$-coloring with
a strictly $F$-degenerate order $S.$

It is obvious that $R$ is a representative set of $(G,H)$
and $(v_1,1),\ldots,(v_t,1)$ are the lowest elements of $S.$
It remains to show that $S$ is a strictly $F$-degenerate order.
Consider $(u,i) \in R.$
If $(u,i) \in R_1,$ then it has less than $f_i(u)$ neighbors
in the lower order of $S_1$ by the construction.
Since the neighbors in the lower order of $S_1$ and that of $S$ are the same,
$(u, i)$ has less than $f_i(u)$ neighbors in the lower order of $S.$

Assume that $(u,i) \notin R_1.$
Suppose to the contrary that  $(u,i)$ has at least $f_i(u)$ neighbors
in the lower order of $S.$
Since $S'$ is a strictly $f'$-degenerate order,
$(u,i)$ has less than $f'_i(u)=f_i(u)$ neighbors
in the lower order of $S'.$
Then an additional neighbor in the lower order of $S,$ say $(v,1),$
is in $R_1$ by the construction of $S.$
Moreover, the order of $(u,i)$ in $S'$ is lower than that of $(v,1).$
It follows that $(v,1)$ has at least $f'_1(v) =1$ neighbor in the lower order of
a strictly $f'$-degenerate order $S',$ a contradiction.
It follows that $(u, i)$ has less than $f_i(u)$ neighbors
in the lower order of $S.$
Thus $S$ is a strictly $F$-degenerate order and this completes the proof.
\end{proof}

Note that Lemma \ref{front} holds regardless of an upper bound on $f_i(v).$

\begin{lemma} \label{sep}
Let $(G,H)$ be a minimal counterexample to Theorem \ref{main2}
with a DP-$F$-precoloring of $3$-cycle $C_0. $
Then $G$ has no separating $3$-cycles.
\end{lemma}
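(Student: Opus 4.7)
The plan is to derive a contradiction from the assumption that the minimal counterexample $(G,H)$ contains a separating $3$-cycle $C$. Because $C$ separates the plane, both $int(C)$ and $ext(C)$ contain vertices off of $C$, so each is a proper induced subgraph of $G$ with strictly fewer vertices than $G$. Each remains planar and belongs to $\mathcal{A}$, since the avoidance of pairwise adjacent $3$-, $4$-, and $5$-cycles is hereditary under taking subgraphs. The precolored $3$-cycle $C_0$ lies in one of the two closed regions; without loss of generality I would assume $V(C_0) \subseteq V(ext(C))$, swapping the roles of interior and exterior otherwise.

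The next step is to apply minimality twice. First, apply it to $ext(C)$ together with its naturally restricted cover: this is a strictly smaller instance of Theorem \ref{main2} containing the precolored $3$-cycle $C_0$, so the given precoloring extends to a DP-$F$-coloring $R_1$ of $ext(C)$. The restriction of $R_1$ to $V(C)$ is then a DP-$F$-coloring of $C$. Second, apply minimality to $int(C)$ with its restricted cover, treating $C$ as the precolored $3$-cycle (with the coloring inherited from $R_1$); this yields a DP-$F$-coloring $R_2$ of $int(C)$ that agrees with $R_1$ on $V(C)$.

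Finally, I glue $R_1$ and $R_2$ into a DP-$F$-coloring of $(G,H)$. Because they coincide on $V(C)$, their union $R := R_1 \cup R_2$ is a representative set of $(G,H)$ extending the original precoloring of $C_0$. As a strictly $F$-degenerate order I take the order of $R_1$ followed by the order of $R_2$ restricted to the vertices of $V(int(C)) \setminus V(C)$, relying on the fact used throughout the paper (cf.\ Lemma \ref{extend} and Lemma \ref{front}) that an extension of a precoloring can be ordered so that the precoloring sits at the bottom. For elements of $R_1$ the lower-ordered neighbors are unchanged; for an interior element $(v,i)$, the planar separating property of $C$ forces every neighbor of $v$ in $G$ to lie in $V(int(C))$, so its count of lower-ordered neighbors in the concatenated order equals the corresponding count in $R_2$'s order and is strictly less than $f_i(v)$. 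This produces a DP-$F$-coloring of $(G,H)$ extending the precoloring of $C_0$, contradicting the choice of $(G,H)$ as a counterexample.

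The main point requiring care is the bookkeeping in the final gluing step, namely verifying that the concatenated order is strictly $F$-degenerate. This reduces to the purely topological observation that in a plane graph, a vertex strictly inside a cycle $C$ has no neighbor strictly outside $C$, so that the two partial colorings interact only through $V(C)$, where they already agree by construction.
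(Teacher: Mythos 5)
Your proposal is correct and follows essentially the same route as the paper: apply minimality to $ext(C)$ to color it extending the precoloring of $C_0$, then apply minimality to $int(C)$ treating $C$ (with the inherited coloring) as the precolored triangle, use Lemma \ref{front} to put the $C$-colors at the bottom of the interior order, and glue by concatenation, with the degeneracy check resting on the topological fact that interior vertices have no exterior neighbors. No gaps.
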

\begin{proof}
Suppose to the contrary that $G$ has a separating $3$-cycle $C.$
By symmetry, we assume $C_0 \subseteq ext(C).$
By minimality, a DP-$F$-coloring on $C_0$
can be extended to a coloring $R_1$ on $ext(C).$
Let $S_1$ be a strictly $F$-degenerate order of $R_1.$
Let $V(C) =\{x,y,z\}$ and $(x,1), (y,1), (z,1) \in R_1.$
By minimality, $int(C)$ has a DP-$F$-coloring $R_2$
including $(x,1),(y,1),(z,1).$
By Lemma \ref{front}, $R_2$ has a strictly $F$-degenerate order $S_2$ such that
$(x,1),(y,1),(z,1)$ are the lowest order elements.

It is obvious that $R_1 \cup R_2$ is a representative set of $(G,H).$
Let $S'_2$ be obtained from $S_2$ by deleting $(x,1),(y,1),(z,1).$
We claim that $S$ obtained from $S_1$ followed by $S'_2$
is a strictly $F$-degenerate order.
If $(u,i) \in R_1,$  then the neighbors of $(u,i)$ in the lower order of $S$
are the same as that of $S_1$ by the construction of $S.$
It follows from $S_1$ is a strictly $F$-degenerate that
$(u,i)$ has less than $f_i(u)$ neighbors in the lower order of $S.$
Note that this case also includes $(u,i)$ is $(x,1),(y,1)$ or $(z,1).$

Consider $(u,i) \in R_2 - R_1.$
Then $(u,i)$ has less than $f_i(v)$ neighbors in the lower order of $S_2.$
It follows that $(u,i)$ has less than $f_i(v)$ neighbors that are in $R_2$
and in the lower order of $S.$
Since $(u,i)$ is not adjacent to any elements in $R_1 - \{(x,1),(y,1),(z,1)\},$
all neighbors of $(u,i)$ are in $R_2.$
Consequently, $(u,i)$ has less than $f_i(v)$ neighbors  in the lower order of $S.$
Thus $R_1 \cup R_2$ is a DP-$F$-coloring of $(G,H),$ a contradiction.
\end{proof}

\begin{lemma}\label{mainlemma}
Let $k\geq 3$ and $K \subseteq G$ with $V(K)=\{v_1,\ldots, v_m\}$
such that the followings hold. \\
(i)  $k- (d_G(v_1)- d_K(v_1)) \geq 3.$\\
(ii) $d_G(v_m) \leq k$ and neighbors of $v_m$ in $K$ are exactly $v_1$ and $v_{m-1}.$\\
(iii) For $2 \leq i \leq m-1,$ $v_i$ has at most $k-1$ neighbors in $G[\{v_1,\ldots,v_{i-1}\}]$ $\cup (G - K).$\\
If $|f(v)| \geq k$ for every vertex $v,$ then a
DP-$F$-precoloring of $G - K$ can be extended to that of $G.$
\end{lemma}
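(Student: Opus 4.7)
I invoke Lemma~\ref{extend} and reduce the task to producing a DP-$F^*$-coloring of $K$, where $F^*$ is the residual function of the given DP-$F$-precoloring of $G-K$. The three hypotheses translate into budgets: (i) yields $|F^*(v_1)|\geq k-(d_G(v_1)-d_K(v_1))\geq 3$; (ii) implies $v_m$ has at most $k-2$ neighbors in $G-K$, so $|F^*(v_m)|\geq 2$; (iii) guarantees that along the forward order $v_1,v_2,\ldots,v_{m-1}$ each $v_i$ with $i\geq 2$ has at most $k-1$ lower-order neighbors at its turn and hence $|F^*(v_i)|\geq 1$ there.

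I then build a strictly $F^*$-degenerate order on $K$ of the form $v_1,v_2,\ldots,v_{m-1},v_m$. The first $m-1$ vertices are colored greedily using the budgets above; since $|F^*(v_1)|\geq 3$ and every $f_i\leq 2$, the set $A$ of colors available to $v_1$ has $|A|\geq 2$. The only obstruction is $v_m$, whose lower-order neighbors number up to $(k-2)+1+1=k$ from $G-K$, $v_1$, and $v_{m-1}$, giving only $|F^*(v_m)|\geq 0$ in the worst case. To force $|F^*(v_m)|\geq 1$ I exploit the two extra budget units at $v_1$. Let $B$ be the set of slots available to $v_m$ after $G-K$, so $\sum_{c\in B}F^*_c(v_m)\geq 2$. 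If some $c\in A$ is not matched under $H$ to any slot in $B$, I pick that $c$ for $v_1$; then $v_1$ contributes $0$ to the reduction of $|F^*(v_m)|$, $v_{m-1}$ contributes at most $1$, and a slot survives for $v_m$. Otherwise every $c\in A$ matches into $B$, which by the matching property of $H$ forces $|A|\leq|B|$; together with $|A|\geq 2$ and $|F^*(v_m)|=2$ this gives $|A|=|B|=2$, $B=\{c_\alpha,c_\beta\}$ with both slots fragile ($F^*=1$), and a bijection $A\to B$ under $M_{v_1,v_m}$.

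In this final tight case I try both admissible choices for $v_1$: each choice pre-kills a specific slot of $B$ at the $v_1$-step, and the matching property of $M_{v_{m-1},v_m}$ ensures that the eventually chosen $v_{m-1}$-color matches at most one slot of $B$; at least one of the two trials aligns the two hits onto the same slot, so that $v_{m-1}$'s impact is wasted on an already-dead slot and the other slot of $B$ survives for $v_m$. The hardest part will be making this coordination rigorous, because $v_{m-1}$'s forced color can depend on $v_1$'s choice either directly (when $v_1 v_{m-1}\in E(G)$) or indirectly through the greedy choices at $v_2,\ldots,v_{m-2}$; condition (iii) and the matching property of $H$ are applied edge-by-edge to control how $v_{m-1}$'s residual shifts between the two trials and to locate a successful run.
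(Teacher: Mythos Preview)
Your reduction via Lemma~\ref{extend} and the budget computations from (i)--(iii) match the paper's setup exactly, and the case split on whether some color in $A$ avoids matching into $B$ is essentially the paper's Case~1. The gap is in your ``tight case'' ($|A|=|B|=2$, both slots of $B$ with residual~$1$): the two-trial argument does not work. The dependence of $v_{m-1}$'s color on the choice at $v_1$ can be adversarial enough that \emph{both} trials kill the surviving slot of $B$, and neither condition~(iii) nor the matching property of $H$ prevents this.

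Here is a concrete failure with $k=3$, $m=3$, and $K$ a triangle. After the precoloring of $G-K$ take
\[
f^*(v_1)=(2,1,0,\ldots),\quad f^*(v_2)=(0,0,0,0,1,1,0,\ldots),\quad f^*(v_3)=(0,0,1,1,0,\ldots),
\]
with matchings $(v_1,1)\!-\!(v_3,3)$, $(v_1,2)\!-\!(v_3,4)$ on $v_1v_3$; $(v_2,5)\!-\!(v_3,3)$, $(v_2,6)\!-\!(v_3,4)$ on $v_2v_3$; and $(v_1,1)\!-\!(v_2,5)$, $(v_1,2)\!-\!(v_2,6)$ on $v_1v_2$. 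Then $A=\{1,2\}$, $B=\{3,4\}$, and the map $A\to B$ is a bijection. Choosing $(v_1,1)$ forces $(v_2,6)$, which kills slot~$4$; slot~$3$ was already killed by $v_1$. Choosing $(v_1,2)$ forces $(v_2,5)$, which kills slot~$3$; slot~$4$ was already killed by $v_1$. Both trials leave $v_3$ with residual~$0$, and there is no freedom in the greedy step at $v_2$ to exploit.

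The paper resolves the tight case by a different mechanism: it does \emph{not} try a second color at $v_1$. Instead, it always picks for $v_1$ a color $c$ with $f^*_c(v_1)=2$ (such a color exists since $|f^*(v_1)|\ge 3$ and each coordinate is at most~$2$), runs the greedy order $v_1,\ldots,v_{m-1}$ once, and if the surviving slot of $v_m$ gets killed by $v_{m-1}$, it \emph{reorders} the representative set by inserting $(v_m,c')$ (the slot matched to $(v_1,c)$) at the \emph{front}. This works because $(v_m,c')$ then has zero lower neighbors, $(v_1,c)$ now has one lower neighbor but budget~$2$, and no other $(v_i,j_i)$ is adjacent to $(v_m,c')$ by condition~(ii) and the straightened matchings. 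In the example above this yields the valid order $(v_3,3),(v_1,1),(v_2,6)$. The key idea you are missing is that the extra unit of budget at $v_1$ is spent not on a second trial but on absorbing $v_m$ as a lower neighbor after a reordering.
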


\begin{proof}
Let $R_0$ be a DP-$F$-coloring on $G-K.$
From Condition (i),  $|f^*(v_1)| \geq |f(v)| - (d_G(v_1)- d_K(v_1))$
$\geq k - (d_G(v_1)- d_K(v_1))  \geq 3.$
From Condition (ii), $|f^*(v_m)| \geq |f(v_m)| - (k-2) \geq 2.$
We consider only the case $|f^*(v_m)| =2 $ since
a strictly $F^*$-degenerate order of  $R_2$  is also
a strictly $g$-degenerate if $g_i(v) \geq f^*_i(v)$ for every vertex $v$ and $i$ such that $1 \leq i \leq s.$
By renaming the colors, we assume that $(v_m,j)$ and $(v_i,j),$
where $i=1$ and $m-1,$ are adjacent for each $j.$
Since $|f^*(v_1)|\geq 3,$ we may assume further that $f^*_1(v_1)>f^*_1(v_m).$
By Lemma \ref{extend}, it suffices to show that $K$ has a DP-$F^*$-coloring.
Consider two cases.\\

\textbf{Case 1:} $\boldsymbol{f^*_1(v_m)=0.}$\\
Choose $(v_1,1)$ in a coloring. Observe that $|f^*(v_m)|$ remains the same.
Apply greedy coloring to $v_2,\ldots, v_{m-1},$ respectively.
At this stage $|f^*(v_m)| \geq 1,$
thus we can use greedy coloring to $v_m$ to complete a DP-$F^*$-coloring.\\

\textbf{Case 2:}  $\boldsymbol{f^*_1(v_m) \geq 1.}$\\
Recall that we consider only $f_i(v) \in \{0,1,2\}$ for each vertex $v$
and every $i$ such that $1 \leq i \leq s.$
It follows that $f^*_1(v_1) =2$ and  $f^*_1(v_m) = 1.$
Since $|f^*(v_m)| =2,$ we  assume that $f^*_2(v_m)=1.$
Choose $(v_1,1)$ in a coloring.
We can apply greedy coloring to $v_2,\ldots, v_{m-1},$ respectively.
By Condition (iii), $K- v_m$ has a DP-$F^*$-coloring, say $R.$
By Condition (ii), $(v_m,2)$ has exactly two neighbors in $H$ restrict to $K.$

If $(v_{m-1},2)$ is not in $R,$ then $(v_m,2)$ has no neighbors in $R.$
Thus we can add $(v_m,2)$ to $R$to complete a DP-$F^*$-coloring.
Assume otherwise that $(v_{m-1},2) \in R.$
Let $(v_i, j_i) \in R$ for $2 \leq i \leq m-2.$
By greedy coloring,
we have a strictly $F^*$-degenerate order $S_1=(v_1,1),$ $(v_2,j_2),$ $\ldots,$ $(v_{m-2},j_{m-2}),$ $(v_{m-1}, 2).$

We claim that the order $S$ constructed
from $(v_m,1)$ followed by $S$ is a strictly $F^*$-degnerate order.  
It is obvious that $(v_1,1)$ has less than $f^*_1(v_1) =2$ neighbors
in the lower order.
Consider $(v_i, j_i)$ where $2 \leq i \leq m-2.$
Since $(v_i, j_i)$ is not adjacent to $(v_m,1)$ by Condition (ii),
$(v_i, j_i)$ has less than $f^*_{j_i}(v_i)$ neighbors
in the lower order of $S.$
Since  $(v_{m-1},2)$ is not adjacent to $(v_m,1),$
the element $(v_{m-1},2)$ has less than $f^*_2(v_{m-1}).$
It is obvious that the set of elements in the order of $S$ is
a representative set of $K.$
Thus $K$ has a DP-$F^*$-coloring.
This completes the proof.
\end{proof}

\section{Proofs of Main Results}
\noindent \textbf{Proof of Theorem} \ref{main1}\textbf{.}
The outline of the proof is similar to that in \cite{Tho}
with additional details on DP-$F$-coloring.
We begin by adding new edges in a plane graph
until we obtain a plane graph $G$
such that every bounded face is a triangle.
Let $|f(v)| \geq 5$ for each vertex $v.$
Let a cycle $C= v_1\ldots v_p$ be the boundary of the unbounded face.
Using induction on $|V(G)|,$
we prove the stronger result that a DP-$F$-coloring can be achieved even when
$v_1$ and $v_p$ have been precolored and $|f(v_i)| \geq 3$ for $2\leq i \leq p-1.$
Let $\{(v_1,a),(v_p,b)\}$ be a DP-$F$-precoloring.
If $|V(G)|=3,$ the vertex $v_2$ can be greedily colored.
Consider $|V(G)| \geq 4$ for the induction step. \\

\textbf{Case 1: } $\boldsymbol{C}$ \textbf{has a chord}
$\boldsymbol{v_iv_j}$ \textbf{with}
$\boldsymbol{1 \leq i \leq j-2 \leq p-1.}$

Let $C_1$ be the cycle $v_1v_2\ldots v_i$ $v_j v_{j+1}\ldots v_p$
and let $C_2$ be the cycle $v_jv_iv_{i+1}\ldots v_{j-1}.$
Let $G_1 = int(C_1)$ and let $G_2 =int(C_2).$
By induction hypothesis and Lemma \ref{front},
$G_1$ has a DP-$F$-coloring $R_1$
with a strictly $F$-degenerate order $S_1$
such that two lowest elements are $(v_i,1)$ and $(v_j,1).$  
It follows from Lemma \ref{front} that $G_2$ has
a DP-$F$-coloring $R_2$ with a strictly $F$-degenerate order $S_2$
with two lowest elements $(v_i,1)$ and $(v_j,1).$  
Let $S'_2$ be an order obtained from $S_2$ by removing  $(v_i,1)$ and $(v_j,1).$  
It can be shown as in the proof of Lemma \ref{sep} that
$R_1 \cup R_2$ is a representative set with a strictly $F$-degenerate order
obtained from $S_1$ followed by $S'_2.$

\textbf{Case 2:} $\boldsymbol{C}$ \textbf{has no chords.}\\
Let $v_1, u_1,u_2,\ldots,u_m,v_3$ be the neighbors of $v_2$ in order.
Let $U$ denote $\{u_1,\ldots,u_m\}$
and $G'$ denote $G- \{v_2\}.$
Using a DP-$F$-coloring on $v_1$ and $v_p,$
we have $|f^*(v_2)| \geq |f(v_2)|-1=2$ for $p \geq 4$
and $|f^*(v_2)| \geq |f(v_2)|-2=1$ for $p = 3.$
By renaming the  colors, we assume furthermore
that $(v_2,i)$ is adjacent to $(u,i)$
for each $u\in U \cup \{v_3\}$ and $ 1 \leq i \leq s.$
Let $f^*_1(v_2)= \max\{f^*_1(v_2),\ldots,f^*_s(v_2)\}.$

\textbf{Case 2.1:}  $\boldsymbol{p =3}$ \textbf{ or }
$\boldsymbol{f^*_1(v_2)\geq 2.}$\\
We choose $(v_2,1)$ in a DP-$F$-coloring.
Let $f'$ be obtained from $f$ by letting $f'_1(u) = 0$ for each $u \in U.$  
Since $f_1(u) \leq 2,$    we have $|f'(u)| \geq 3$ for each $u \in U.$  
By induction hypothesis and Lemma \ref{front},
$G'$ has a DP-$f'$-coloring $R'$ with a strictly $f'$-degenerate order $S'$
such that $(v_1,a)$ and $(v_{p=3},b)$ are the first two elements.

Suppose $p=3.$
Let $S$ be obtained from $S'$ by inserting $(v_2,1)$ as the third element.
Since $f^*_1(v_2) \geq 1$ when we have a precoloring $\{(v_1,a),(v_p,b)\},$
the element $(v_2,1)$ can be chosen by a greedy coloring.

Note that the only neighbors of $v_2$ are $v_1,v_3,$ and vertices in $U.$  
If $u \in U,$ then $(u,1)$ is not in $R'$ since $f'_1(u)=0.$
Thus $(v,c)$ where $v \notin U \cup \{v_1, v_3\}$ has
less than $f'_c(v) = f_c(v)$ neighbors in the lower order of $S.$  
Thus $S$ is a strictly $F$-degenerate order of $R' \cup \{(v_2,1)\}.$
It is obvious that $R' \cup \{(v_2,1)\}$  is a representative set
and thus a DP-$F$-coloring.  

Suppose $p=4$ and  $f^*_1(v_2)\geq 2.$
After a coloring on $G',$  we have $f^*_1(v_2)\geq 2 -1$ since
the only possible neighbor of $(v_2,1)$ other than $(v_1,a)$
in the coloring $R_1$ is $(v_3,1).$
Thus a greedy coloring can be applied to $v_2.$\\

\textbf{Case 2.2:}   $\boldsymbol{p \geq 4}$ \textbf{ and }
 $\boldsymbol{f^*_1(v_2)=1.}$\\
Since $|f^*(v_2)|\geq 2$ and by symmetry,
we assume $f^*_2(v_2) =1.$
Define $g_i(v_2) = f^*_i(v_2).$
Let $f'$ be obtained from $f$ by letting $f'_1(u)=\max \{0, f_1(u)-1\},$
$f'_2(u)=\max \{0, f_2(u)-1\}.$  
Observe that  $|f'(u)| \geq 3$ for each $u \in U.$
By induction hypothesis,
$G'$ has a DP-$f'$-coloring $R'$ (thus a DP-$F$-coloring).
It follows from Lemma \ref{front} that $R'$
has a strictly $f'$-degenerate order $S'$
with $(v_1,a)$ and $(v_p,b)$ are the two lowest ordered elements.

Let $t=1$ if  $(v_3,1)$ is not in $R',$ otherwise let $t=2.$
It is obvious that $R= R' \cup \{(v_2,t)\}$ is a representative set.
Let $S$ be an order obtained from inserting $(v_2,t)$
as the third element into $S'.$
We claim that $S$ is a strictly $F$-degenerate order of $R.$

Consider $(v_2,t).$ Since $p \geq 4,$ $(v_2,t)$ is not adjacent to $(v_p,b).$
If $t=a,$ then $f_t(v_2)= g_t(v_2) + 1 = 2,$
otherwise, $f_t(v_2)= g_t(v_2) =1.$
In both cases, $(v_2,t)$ has less than $f_t(v_2)$ neighbors
in the lower order of $S.$

Consider $(v, c)$ in $R$ where $v \notin \{v_1,v_2,v_p\}.$  
We have $(v,c)$ has less than $f'_c(v)$ neighbors other than $(v_2,t)$
in the lower order of $S$ by the construction of $S.$
If $(v,c)$ is adjacent to $(v_2,t),$
then $v \in U$ and $c=t.$
Consequently, $f_c(v)=f'_c(v)+ 1.$
If $(v,c)$ is not adjacent to $(v_2,t),$ then $f_c(v)\geq f'_c(v).$
In both cases, $(v,c)$ has less than $f_c(v)$ neighbors in the lower order of $S.$
Thus $S$ is a strictly $F$-degenerate of $R.$
This completes the proof.

\noindent \textbf{Modification of the Proof of Theorem} \ref{main2}\textbf{.}

For the proof of Theorem \ref{main2}, each configurations
that are forbidden to be contained in a minimal counterexample
are obtained from the fact that (i) $G \in \mathcal{A},$
(ii) $G$ has no separating $3$-cycles (Lemma \ref{sep})  
and the following lemma.
\begin{lemma}\label{degreemain}
Let $|f(v)| \geq 4$ for each vertex $v.$
Let $C$ be a cycle $x_1 \ldots x_m$ with $V(C)\cap V(C_0)= \emptyset$  
where $C_0$ is a precolored $3$-cycle.
Let $C(l_1,\ldots,l_k)$ be obtained from a cycle $C$ with  
$k-1$ internal chords sharing a common endpoint $x_1.$  
Suppose $K=G[C]$ contains $C(l_1,\ldots,l_k)$
where $x_2$ or $x_m$ is not the endpoint of any chord in $C.$
If $d_G(x_1) \leq k+2$ and $d_K(x_1) = k+1,$ then there exists $i \in \{2,3,\ldots,m\}$ such that $d(x_i) \geq 5.$  
\end{lemma}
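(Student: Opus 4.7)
The plan is to argue by contradiction and apply Lemma~\ref{mainlemma}. Suppose $d_G(x_i) \leq 4$ for every $i \in \{2,\ldots,m\}$. Since $V(C) \cap V(C_0) = \emptyset$, Lemma~\ref{min2} gives $d_G(x_i) \geq 4$, so in fact $d_G(x_i) = 4$ for each such $i$. By hypothesis, without loss of generality $x_2$ is not the endpoint of any chord in $K$.

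I would then invoke Lemma~\ref{mainlemma} with its parameter set to $4$, relabelling the vertices of $K$ as $v_1 = x_1$ and $v_i = x_{m+2-i}$ for $2 \leq i \leq m$, so that the vertices of $K \setminus \{x_1\}$ appear in the reverse cycle order $x_m, x_{m-1}, \ldots, x_3, x_2$. The key structural observation is that consecutive vertices in this relabelling are also consecutive on $C$, so each $v_i$ with $2 \leq i \leq m-1$ has the later $K$-neighbor $v_{i+1} = x_{m+1-i}$ via a cycle edge.

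To verify the three conditions of Lemma~\ref{mainlemma}: for (i), the hypothesis gives $d_G(v_1) - d_K(v_1) = d_G(x_1) - d_K(x_1) \leq (k+2) - (k+1) = 1$, so the requirement reduces to $4 - 1 = 3 \geq 3$; for (ii), $d_G(v_m) = d_G(x_2) = 4$ and, because $x_2$ lies on no chord in $K$, its $K$-neighbors are exactly $x_1 = v_1$ and $x_3 = v_{m-1}$; for (iii), each $v_i$ with $2 \leq i \leq m-1$ has $d_G(v_i) = 4$ together with the guaranteed later $K$-neighbor $v_{i+1}$, so the count of its neighbors in $G[\{v_1, \ldots, v_{i-1}\}] \cup (G - K)$ is at most $4 - 1 = 3$. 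Since $G - K$ lies in $\mathcal{A}$, contains $C_0$, and has fewer vertices than $G$, minimality supplies a DP-$F$-coloring of $G - K$ extending the precoloring of $C_0$; Lemma~\ref{mainlemma} then extends it to all of $(G, H)$, contradicting the assumption that $(G, H)$ is a minimal counterexample.

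The most delicate point is condition (iii): the induced subgraph $K$ may carry additional chords not passing through $x_1$, which would inflate the count of earlier $K$-neighbors for some $v_i$. However, since each $v_i$ with $i \geq 2$ has total degree exactly $4$ in $G$ and is guaranteed one later $K$-neighbor via the cycle edge to $v_{i+1}$, its count of earlier-plus-$(G - K)$ neighbors remains at most $3$ regardless of the chord pattern, so no finer case analysis is required. The hypothesis that $x_2$ or $x_m$ is not a chord endpoint enters only through the choice of $v_m$ to satisfy condition (ii).
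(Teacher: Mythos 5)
Your proposal is correct and follows exactly the route the paper intends: the paper disposes of this lemma in one line by saying it is ``immediate from Lemma~\ref{mainlemma} by assuming an order $x_1,\ldots,x_m$ with $x_m$ not an endpoint of any chord,'' which is precisely your application of Lemma~\ref{mainlemma} with parameter $4$ after relabelling so the chord-free vertex comes last. Your verification of conditions (i)--(iii) (including the observation that extra chords of $K$ cannot violate (iii) because each $v_i$ reserves the cycle edge to $v_{i+1}$ as a later neighbor) simply supplies the details the paper omits.
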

One can see that Lemma \ref{degreemain} is immediate from Lemma \ref{mainlemma}
by assuming an order $x_1,\ldots,x_m$ with $x_m$ is not endpoint of any chord.
Thus all forbidden configurations required as in the proof of Theorem \ref{main02} in \cite{SiNa2} are obtained.
Using Lemma \ref{min2} about vertex degrees and the discharging method as in \cite{SiNa2},
one can complete the proof. \\
\begin{figure*}
	\begin{center}
		\includegraphics[width=0.7\textwidth]{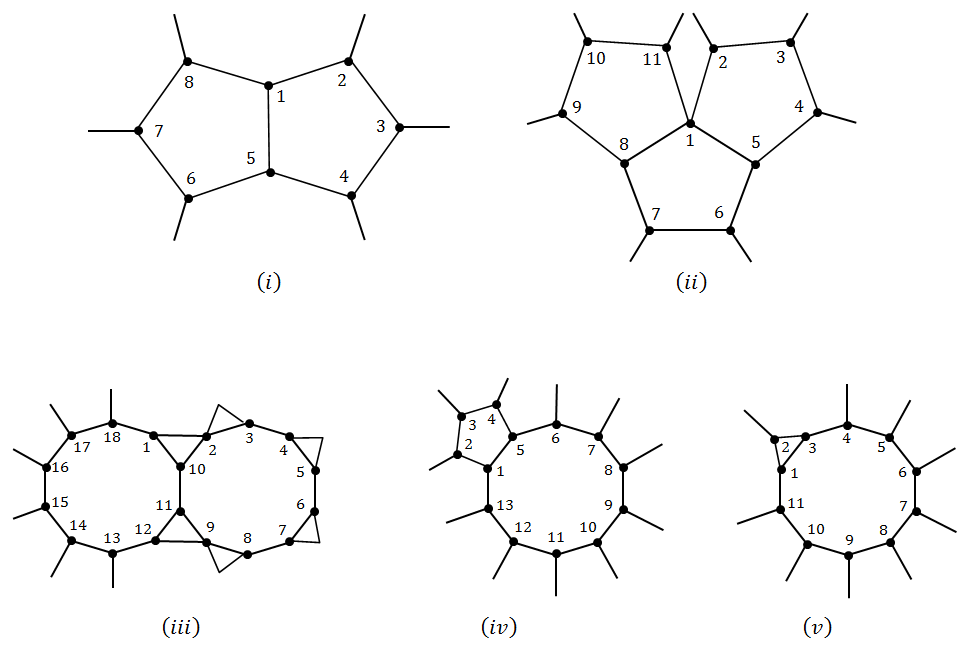}
		\caption{Forbidden configurations in Theorem \ref{main2}}
		\label{Pic1}       
	\end{center}
\end{figure*}

\noindent \textbf{Modification of the Proof of Theorem} \ref{main3}\textbf{.}
All five forbidden configurations of minimal counterexample to
Theorem \ref{main3} (as in Lemma 2.3 of \cite{DP3}) are in (See Fig. 1).
Consider a subgraph $K$ induced by the labeled vertices 
and order the vertices according to labels. 
Note that all labeled vertices are different 
to avoid creating cycles of forbidden lengths. 
It can be proved by Lemma \ref{mainlemma} that 
DP-$F$-precoloring of $G - K$ can be extended to that of $G.$ 
Thus a minimal counterexample cannot contains configurations in Fig. 1.
Using Lemma \ref{min1} about vertex degrees and
the discharging method as in \cite{DP3},
one can complete the proof.

\noindent\textbf{Acknowledgment}
We would like to thank Tao Wang for pointing out a few gaps of proofs and
giving valuable suggestions for  earlier versions of manuscript.

\end{document}